\newtheorem{theo}{Theorem}
\newtheorem{lem}{Lemma}
\newtheorem{propo}{Proposition}
\newtheorem{rem}{Remark}
\newenvironment{defi}{\noindent{\bf Definition.}}{\\}
\newenvironment{ask}{\noindent{\bf Question.}}{\\}
\def\longto{\longrightarrow}
\def\codim{{\rm codim}}
\def\Chi{X}
\def\Wt{{\rm Wt}}
\def\Ta{{\mathcal T}}
\def\hTa{{\hat\Ta}}
\def\hP{{\hat P}}\def\hw{{\hat w}}\def\hW{{\hat W}}\def\hL{{\hat
    L}}\def\hT{{\hat T}}\def\hl{{\hat l}}\def\hB{{\hat B}}
 \def\hs{{\hat s}}   
 \def\hvarpi{\hat{\varpi}} 
 \def\hvarepsilon{\hat{\varepsilon}}
\def\hnu{{\hat \nu}}
\def\hZ{{\hat Z}}
\def\lg{{\mathfrak g}}\def\lt{{\mathfrak t}}
\def\hlg{{\hat{\lg}}}
\def\so{{\mathfrak so}}
\def\halpha{{\hat\alpha}}
\def\RR{{\mathbb R}}\def\QQ{{\mathbb Q}}\def\ZZ{{\mathbb Z}}
\def\CC{{\mathbb C}}
\def\SL{{\rm SL}}
\def\GL{{\rm GL}}
\def\Sp{{\rm Sp}}\def\Spin{{\rm Spin}}
\def\SO{{\rm SO}}
\def\Gr{{\rm Gr}}\def\hG{{\hat G}} 
\def\Hom{{\rm Hom}}
\def\revddots{\mathinner{\mkern1mu\raise\p@\vbox{\kern7\p@\hbox{.}}\mkern2mu\raise4\p@\hbox{.}\mkern2mu\raise7\p@\hbox{.}\mkern1mu}}
\def\base{{\mathcal B}}
\def\diag{{\rm diag}}
\def\Fl{{\mathcal Fl}}
\def\videGB{
    \draw (0,0) -- (1,0) -- (1,1) -- (0,1) -- cycle;
\draw (3,0) -- (4,0) -- (4,1) -- (3,1) -- cycle;
\draw (1.5,0) -- (2.5,0) -- (2.5,-1) -- (1.5,-1) -- cycle;
}
\def\grisun{\draw [fill=gray!60]  (0,0) -- (1,0) -- (1,1) -- (0,1) -- cycle;}
\def\grisdeux{\draw [fill=gray!60] (3,0) -- (4,0) -- (4,1) -- (3,1) -- cycle;}
\def\gristrois{\draw [fill=gray!60] (1.5,0) -- (2.5,0) -- (2.5,-1) --
  (1.5,-1) -- cycle;}
\def\iempty{
    \draw (0,0) -- (2,0) -- (2,3) -- (0,3) -- cycle;
\draw (0.5,0) -- (1.5,0) -- (1.5,-1) --(0.5,-1) -- cycle;
\draw (0,1) -- (2,1) (0,2) -- (2,2) (0,3) -- (2,3) (1,0) -- (1,3);
}
\def\iemptygray{
    \draw (0,0) -- (2,0) -- (2,3) -- (0,3) -- cycle;
\draw (0,1) -- (2,1) (0,2) -- (2,2) (0,3) -- (2,3) (1,0) -- (1,3);
\draw [fill=gray!60] (0.5,0) -- (1.5,0) -- (1.5,-1) --(0.5,-1) -- cycle;
}
\def\iundeux{
  \begin{tikzpicture}[scale=0.2]
    \iempty
  \end{tikzpicture}
}
\def\iuntrois{
  \begin{tikzpicture}[scale=0.2]
\draw [fill=gray!60] (1,2) -- (1,3) -- (2,3) -- (2,2) -- cycle;
    \iempty
  \end{tikzpicture}
}
\def\iuncinq{
  \begin{tikzpicture}[scale=0.2]
\draw [fill=gray!60] (1,1) -- (1,3) -- (2,3) -- (2,1) -- cycle;
    \iempty
  \end{tikzpicture}
}
\def\iunsix{
  \begin{tikzpicture}[scale=0.2]
\draw [fill=gray!60] (1,0) -- (1,3) -- (2,3) -- (2,0) -- cycle;
    \iempty
  \end{tikzpicture}
}
\def\ideuxtrois{
  \begin{tikzpicture}[scale=0.2]
\draw [fill=gray!60] (0,2) -- (0,3) -- (2,3) -- (2,2) -- cycle;
    \iempty
  \end{tikzpicture}
}
\def\ideuxcinq{
  \begin{tikzpicture}[scale=0.2]
\draw [fill=gray!60] (0,2) -- (0,3) -- (2,3) -- (2,1) -- (1,1) -- (1,2) -- cycle;
    \iempty
  \end{tikzpicture}
}
\def\ideuxsept{
  \begin{tikzpicture}[scale=0.2]
\draw [fill=gray!60] (1,0) -- (1,3) -- (2,3) -- (2,0) -- cycle;
    \iemptygray
  \end{tikzpicture}
}
\def\itroissix{
   \begin{tikzpicture}[scale=0.2]
\draw [fill=gray!60] (0,2) -- (0,3) -- (2,3) -- (2,1) -- (1,1) -- (1,2) -- cycle;
    \iemptygray
  \end{tikzpicture}
}
\def\itroissept{
   \begin{tikzpicture}[scale=0.2]
\draw [fill=gray!60] (0,2) -- (0,3) -- (2,3) -- (2,0) -- (1,0)-- (1,2) -- cycle;

    \iemptygray
  \end{tikzpicture}
}
\def\icinqsix{
  \begin{tikzpicture}[scale=0.2]
\draw [fill=gray!60] (0,1) -- (0,3) -- (2,3) -- (2,1) -- cycle;
    \iemptygray
  \end{tikzpicture}
}
\def\icinqsept{
  \begin{tikzpicture}[scale=0.2]
\draw [fill=gray!60] (1,0) -- (1,1) -- (0,1) -- (0,3) -- (2,3) -- (2,0) -- (1,0) -- (1,1) -- cycle;
    \iemptygray
  \end{tikzpicture}
}
\def\isixsept{
  \begin{tikzpicture}[scale=0.2]
    \draw [fill=gray!60] (0,0) -- (2,0) -- (2,3) -- (0,3) -- cycle;
\iemptygray
  \end{tikzpicture}
}
\begin{document}

\title{The saturation property for branching rules -- Examples}
\author{B. Pasquier,
N. Ressayre
}

\begin{abstract}
For a few pairs $(G\subset \hG)$ of reductive groups, we study the decomposition of irreducible $\hG$-modules into  $G$-modules. In particular, we observe the saturation property for all of these pairs. 
\end{abstract}

\maketitle

\section{Introduction}\label{sec:intro}

Let $G$ be a complex connected reductive group. Studying the tensor product decomposition of  irreducible representations
of $G$ is a very classical and important problem in representation theory.
More recently, 
Klyachko's contribution \cite{Kly} of the Horn problem of characterizing the possible eigenvalues of three Hermitian matrices
whose sum is zero, motivated the  so-called saturation conjecture for the group $G=\GL_n$.
This conjecture was solved by Knutson and Tao \cite{KT:saturation} and studied for others groups \cite{DW:saturation,KM,BK:satoddorthsymp,Sam}.

The  tensor product of two irreducible representations of $G$ is an irreducible representation of $\hG=G\times G$. 
In particular, tensor product decomposition is a particular case of 
the following branching problem. Assume that $G$ is embedded in a 
bigger connected reductive group $\hat{G}$. 
Then we are interested in decomposing irreducible representations of 
$\hat G$ as a sum of irreducible $G$-modules.
The aim of this note is to state a saturation property in this more general setting and to study some explicit examples using some computer calculation with
\cite{4ti2} and \cite{sage}.

\subsection{Overview of saturation property for tensor product decomposition}

We fix a Borel subgroup $B$ and a maximal torus $T\subset B$ in $G$. If $\nu$ is a dominant weight, $V_G(\nu)$ denotes the irreducible 
representation of highest weight $\nu$.
For any $G$-module $V$, the set of fixed points is denoted by $V^G$.
The saturation property for $\GL_n$ can be stated as follows.

\begin{theo}[Knutson-Tao]\label{th:satGL}
  Let $\nu_1$, $\nu_2,$ and $\nu_3$ be three dominant weights of $G=\GL_n(\CC)$. 

If $(V_G(N\nu_1)\otimes V_G(N\nu_2)\otimes V_G(N\nu_3))^G\neq\{0\}$ for some positive integer $N$ then
$(V_G(\nu_1)\otimes V_G(\nu_2)\otimes V_G(\nu_3))^G\neq\{0\}$.
\end{theo}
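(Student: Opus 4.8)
The plan is to translate the invariant-theoretic statement into a purely combinatorial one about Littlewood--Richardson coefficients, and then to establish saturation for those coefficients by realizing them as lattice-point counts in a family of rational polytopes that scale linearly with their boundary data. First I would recall that for $G=\GL_n(\CC)$ a dominant weight is a weakly decreasing integer sequence, and that by Schur--Weyl theory the invariant space $(V_G(\nu_1)\otimes V_G(\nu_2)\otimes V_G(\nu_3))^G$ is nonzero exactly when the three sequences have total weight zero and the Littlewood--Richardson coefficient $c^{\nu}_{\lambda\mu}$ attached to the corresponding triple $(\lambda,\mu,\nu)$ is positive. Since this coefficient is manifestly invariant under the symmetry relating the three factors, the theorem reduces to the statement: if $c^{N\nu}_{N\lambda,N\mu}>0$ for some $N\ge 1$, then $c^{\nu}_{\lambda\mu}>0$.

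Next I would invoke the \emph{hive model}: the coefficient $c^{\nu}_{\lambda\mu}$ equals the number of integer points of a polytope $H(\lambda,\mu,\nu)\subset\RR^{M}$ cut out by the rhombus (concavity) inequalities on triangular arrays whose boundary is prescribed by $(\lambda,\mu,\nu)$. The decisive structural feature is that all the defining inequalities are homogeneous while the boundary data enter linearly, so that $H(N\lambda,N\mu,N\nu)=N\cdot H(\lambda,\mu,\nu)$ as real polytopes. Consequently, if the scaled polytope contains a lattice point then $H(\lambda,\mu,\nu)$ is nonempty over $\RR$, and the theorem is reduced to the single assertion that \emph{a nonempty hive polytope with integral boundary contains a lattice point}.

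The hard part is precisely this integrality statement, which I would prove through the honeycomb dual of the hive model. A hive corresponds to a honeycomb, a balanced planar diagram of rays and edges whose unbounded directions and multiplicities are fixed by $(\lambda,\mu,\nu)$, and the lattice points of $H$ correspond to honeycombs with vertices in $\ZZ$. The key lemma to establish is a \emph{rigidity} property: among all honeycombs with the given integral boundary, a suitably extremal (generic) one is combinatorially rigid, and rigidity forces its vertices into the integer lattice, since each vertex is then pinned by integral incidence data. Producing and controlling such an extremal honeycomb is the main obstacle; one handles it by analyzing the connected components and the degrees of freedom of a honeycomb, perturbing the boundary slightly to reach a generic configuration and then arguing that the perturbation can be removed without leaving the lattice. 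Granting this lemma, a nonempty $H(\lambda,\mu,\nu)$ contains an integral honeycomb, hence a lattice point, which yields $c^{\nu}_{\lambda\mu}>0$ and completes the reduction.
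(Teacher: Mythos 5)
The paper does not prove this theorem itself: it states it and cites the original honeycomb proof of Knutson and Tao \cite{KT:saturation} (along with the quiver proof of Derksen--Weyman and the building-theoretic proof of Kapovich--Millson). Your outline follows exactly that original hive/honeycomb route --- reduction to positivity of Littlewood--Richardson coefficients, linear scaling of the hive polytope, and the key integrality claim settled by rigidity of an extremal honeycomb --- so it is essentially the same approach the paper points to, with the caveat that the rigidity lemma, which is the genuinely hard step, is only asserted rather than proved in your sketch.
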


The first proof \cite{KT:saturation} of Theorem~\ref{th:satGL} due to Knutson and Tao uses a combinatorial model for 
Littlewood-Richardson coefficients called honeycombs. 
Derksen and Weyman reproved \cite{DW:saturation} this result using representations of quivers and
 Kapovich and Millson obtained a proof \cite{KM} using the geometry of Bruhat-Tits buildings.

Assume now that $G$ is semisimple and let $\Lambda_R$ denote its
root lattice. Theorem~\ref{th:satGL} can be restated as follows.

\begin{theo}[Knutson-Tao]\label{th:satSL}
  Let $\nu_1$, $\nu_2,$ and $\nu_3$ be three dominant weights of $G=\SL_n(\CC)$. 

If $(V_G(N\nu_1)\otimes V_G(N\nu_2)\otimes V_G(N\nu_3))^G\neq\{0\}$ for some positive integer $N$ 
and $\nu_1+\nu_2+\nu_3\in\Lambda_R$, then
$(V_G(\nu_1)\otimes V_G(\nu_2)\otimes V_G(\nu_3))^G\neq\{0\}$.
\end{theo}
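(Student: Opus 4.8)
The plan is to \emph{deduce Theorem~\ref{th:satSL} from Theorem~\ref{th:satGL}} by passing between $\SL_n$ and $\GL_n$, exploiting the fact that the two groups share the same derived group and that their dominant weights are closely related. First I would set up the dictionary: a dominant weight $\nu$ of $\SL_n$ is a nonincreasing integer tuple $(a_1\ge\cdots\ge a_{n-1}\ge 0)$ (the fundamental coweight coordinates), which I lift to the $\GL_n$-weight $\tilde\nu=(a_1,\dots,a_{n-1},0)$. Conversely any dominant $\GL_n$-weight restricts to a dominant $\SL_n$-weight. The key numerical point is that for $\SL_n$-weights $\nu_1,\nu_2,\nu_3$, the restriction $V_{\GL_n}(\tilde\nu_i)|_{\SL_n}$ is exactly $V_{\SL_n}(\nu_i)$, so that the space of $\SL_n$-invariants in $V_{\SL_n}(\nu_1)\otimes V_{\SL_n}(\nu_2)\otimes V_{\SL_n}(\nu_3)$ is governed by $\GL_n$-invariants up to a determinantal twist: $(V_{\GL_n}(\tilde\nu_1)\otimes V_{\GL_n}(\tilde\nu_2)\otimes V_{\GL_n}(\tilde\nu_3))^{\SL_n}\ne 0$ iff the $\GL_n$-module $V_{\GL_n}(\tilde\nu_1)\otimes V_{\GL_n}(\tilde\nu_2)\otimes V_{\GL_n}(\tilde\nu_3)$ contains a power of $\det$, i.e.\ iff $(V_{\GL_n}(\tilde\nu_1)\otimes V_{\GL_n}(\tilde\nu_2)\otimes V_{\GL_n}(\tilde\nu_3)\otimes V_{\GL_n}((-k)^n))^{\GL_n}\ne 0$ where $k|\tilde\nu_i|$'s total degree $|\tilde\nu_1|+|\tilde\nu_2|+|\tilde\nu_3| = kn$; this requires $n\mid (|\tilde\nu_1|+|\tilde\nu_2|+|\tilde\nu_3|)$, which is precisely the condition $\nu_1+\nu_2+\nu_3\in\Lambda_R$.

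Granting this translation, the argument runs as follows. Suppose $(V_{\SL_n}(N\nu_1)\otimes V_{\SL_n}(N\nu_2)\otimes V_{\SL_n}(N\nu_3))^{\SL_n}\ne 0$ and $\nu_1+\nu_2+\nu_3\in\Lambda_R$, so $|\tilde\nu_1|+|\tilde\nu_2|+|\tilde\nu_3|=kn$ for some integer $k\ge 0$. Then $N|\tilde\nu_i|$ sum to $Nkn$, and the nonvanishing of $\SL_n$-invariants for the $N$-scaled weights translates to the nonvanishing of $\GL_n$-invariants among $V_{\GL_n}(N\tilde\nu_1)$, $V_{\GL_n}(N\tilde\nu_2)$, $V_{\GL_n}(N\tilde\nu_3)$, and $V_{\GL_n}((-Nk)^n)$ — four $\GL_n$-weights whose $N$-th roots $\tilde\nu_1,\tilde\nu_2,\tilde\nu_3,((-k)^n)$ have total weight zero. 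I would then invoke the $\GL_n$ saturation theorem (Theorem~\ref{th:satGL}) — stated there for three factors, but the passage to a fourth factor is harmless since one may absorb $V_{\GL_n}((-Nk)^n)$ into, say, $V_{\GL_n}(N\tilde\nu_3)$ using that $V_{\GL_n}(\mu)\otimes V_{\GL_n}((-Nk)^n)=V_{\GL_n}(\mu-(Nk)^n)$, i.e.\ det-twists commute with everything and the $4$-fold invariant problem for $\GL_n$ is literally a $3$-fold invariant problem after such a twist. Saturation then gives $(V_{\GL_n}(\tilde\nu_1)\otimes V_{\GL_n}(\tilde\nu_2)\otimes V_{\GL_n}(\tilde\nu_3)\otimes V_{\GL_n}((-k)^n))^{\GL_n}\ne 0$, which translates back to $(V_{\SL_n}(\nu_1)\otimes V_{\SL_n}(\nu_2)\otimes V_{\SL_n}(\nu_3))^{\SL_n}\ne 0$, as desired.

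The step I expect to be the main obstacle — or at least the one needing the most care — is the bookkeeping for the determinantal twist and the precise identification of the root-lattice condition $\nu_1+\nu_2+\nu_3\in\Lambda_R$ with the divisibility $n\mid(|\tilde\nu_1|+|\tilde\nu_2|+|\tilde\nu_3|)$. Here one must be careful that the lift $\nu\mapsto\tilde\nu$ is not canonical (one may add $(1^n)=\det$-weight to any $\GL_n$ lift), and that the fixed-point space $(\cdots)^{\SL_n}$ for $\GL_n$-modules is the multiplicity space of the whole $\det^{\mathbb Z}$-isotypic component, not just of the trivial $\GL_n$-module. One should verify that $\Lambda_R$ for $\mathrm{SL}_n$ is the sublattice of the weight lattice consisting of weights $(a_1,\dots,a_{n-1})$ in fundamental-weight coordinates with $\sum i a_i\equiv 0 \pmod n$ — equivalently, that the center $\mathbb Z/n$ acts trivially — and match this against the degree condition. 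Once this identification is pinned down, the rest is a formal transfer of the Knutson–Tao theorem. A secondary (minor) point is the reduction from four tensor factors to three for $\GL_n$, but since three of the four weights can be grouped and the fourth is a pure power of $\det$, this is genuinely just a relabeling and carries no content.
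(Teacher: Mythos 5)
Your proposal is correct and matches the paper's (implicit) approach: the paper offers no separate proof, presenting Theorem~\ref{th:satSL} simply as a restatement of Theorem~\ref{th:satGL}, and your $\GL_n$/$\SL_n$ dictionary --- lifting to polynomial $\GL_n$-weights, identifying $\SL_n$-invariants with the $\det^k$-isotypic component, matching $n\mid\sum_i|\tilde\nu_i|$ with $\nu_1+\nu_2+\nu_3\in\Lambda_R$, and absorbing the determinantal twist into one factor --- is exactly the standard translation being invoked. The details you flag as needing care (the degree bookkeeping and the reduction of the four-factor problem to three via a $\det$-twist) all check out.
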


We say that the tensor product decomposition for $\SL_n$ satisfies the {\it saturation property}.
The best known uniform generalization of Theorem~\ref{th:satSL} to any simple group $G$ is 

\begin{theo}[Kapovich-Millson \cite{KM}]\label{th:satGss}
  Let $\nu_1$, $\nu_2,$ and $\nu_3$ be three dominant weights of the simple group $G$. 
Let $k$ be the least
common multiple of the coefficients of the highest root of $G$ written in terms of simple roots.

If $(V_G(N\nu_1)\otimes V_G(N\nu_2)\otimes V_G(N\nu_3))^G\neq\{0\}$ for some positive integer $N$ and $\nu_1+\nu_2+\nu_3\in\Lambda_R$, then
$(V_G(k^2\nu_1)\otimes V_G(k^2\nu_2)\otimes V_G(k^2\nu_3))^G\neq\{0\}$.

\end{theo}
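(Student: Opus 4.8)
The approach I would take follows Kapovich--Millson and is geometric, through the theory of (thick) Euclidean buildings. The plan is first to convert the nonvanishing of the trilinear invariant into an existence statement for geodesic triangles. Let $X$ be a thick Euclidean building attached to $G$ (as provided, e.g., by $G$ over a non-archimedean local field such as $\CC((t))$); its apartments are copies of $\lt_\RR$ acted on by the affine Weyl group $W_{\mathrm{aff}}$, its vertices are the $W_{\mathrm{aff}}$-orbits of the vertices of the fundamental alcove, and its \emph{special} vertices form the coweight lattice $P^\vee$. Writing $\Delta$ for the closed dominant chamber, each pair of special vertices has a refined, $\Delta$-valued distance $d_\Delta$, and one reads each dominant weight $\nu_i$ as such a length. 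The first step is the dictionary, furnished by the Satake isomorphism for the spherical Hecke algebra: up to a controlled change of basis, the multiplicity $\dim (V_G(\nu_1)\otimes V_G(\nu_2)\otimes V_G(\nu_3))^G$ is a structure constant of the Hecke ring, and such a structure constant is nonzero exactly when $X$ contains a geodesic triangle with special vertices $x_0,x_1,x_2$ and refined side lengths $d_\Delta(x_0,x_1)=\nu_1$, $d_\Delta(x_1,x_2)=\nu_2$, $d_\Delta(x_2,x_0)=\nu_3$.

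Granting this dictionary, the easy (necessity) direction is the observation that the refined side lengths of any geodesic triangle in $X$ satisfy a finite system of homogeneous linear inequalities --- the stability, or generalized triangle, inequalities --- together with the single closing relation $\nu_1+\nu_2+\nu_3\in\Lambda_R$ expressing that the triangle closes up in the (co)root lattice. Since these inequalities are invariant under scaling, a triangle realizing the level-$N$ data $N\nu_1,N\nu_2,N\nu_3$ shows, after dividing by $N$, that $(\nu_1,\nu_2,\nu_3)$ is a rational point of the admissibility cone; combined with the hypothesis $\nu_1+\nu_2+\nu_3\in\Lambda_R$ this is all the arithmetic input I would use.

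The substance is the converse construction: starting from a lattice point of the admissibility cone, build an honest geodesic triangle with special vertices. The plan is to realize the prescribed refined lengths by a broken geodesic in a single apartment --- most conveniently as a concatenation of Littelmann-type paths --- and then to apply a retraction of $X$ onto an apartment so that the configuration folds into a genuine closed triangle whose side lengths are again $\nu_1,\nu_2,\nu_3$. Two denominators obstruct doing this on the nose. First, a rational cone point is only guaranteed to be realized by a triangle whose intermediate vertices are arbitrary vertices of $X$ rather than special ones; since the vertices of the fundamental alcove sit at the points $\varpi_i^\vee/a_i$ with $\theta=\sum_i a_i\alpha_i$ the highest root, scaling by $k=\mathrm{lcm}_i(a_i)$ is exactly what is needed to push every vertex into $P^\vee$ and hence make it special. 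Secondly, the change of basis relating the Hecke structure constants to the genuine tensor product multiplicities introduces a second, independent denominator that is again bounded by $k$. Clearing these two denominators scales the data by $k\cdot k=k^2$, producing a geodesic triangle with special vertices and refined side lengths $k^2\nu_1,k^2\nu_2,k^2\nu_3$; the dictionary then returns $(V_G(k^2\nu_1)\otimes V_G(k^2\nu_2)\otimes V_G(k^2\nu_3))^G\neq\{0\}$.

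The main obstacle is precisely the control of these denominators: one must prove that the folding/retraction construction in a thick building, and the Satake change of basis, each cost no more than a factor $k$, so that the composite saturation factor is bounded by $k^2$. This folding analysis --- carried out via the path model and the geometry of the affine Weyl group --- is the technical heart of the Kapovich--Millson argument, and the quadratic shape $k^2$ reflects the composition of two a priori independent steps rather than an optimal bound. As a sanity check, in type $A$ the highest root $\theta=\alpha_1+\cdots+\alpha_{n-1}$ has all marks equal to $1$, so $k=1$ and one recovers Theorem~\ref{th:satSL}.
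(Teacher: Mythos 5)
There is nothing in the paper to compare your argument against: Theorem~\ref{th:satGss} is stated as a quoted result, attributed to Kapovich and Millson \cite{KM}, and the authors give no proof of it (nor do they use its proof anywhere; they only invoke the statement as background for the saturation question). Your outline is a faithful reconstruction of the architecture of the actual Kapovich--Millson argument: the Satake/Hecke dictionary translating nonvanishing of invariants into existence of geodesic triangles with special vertices in a Euclidean building, the scale-invariant stability inequalities plus the lattice condition $\nu_1+\nu_2+\nu_3\in\Lambda_R$ cutting out the cone, and the two independent denominators --- one factor of $k$ to promote the vertices of a triangle realizing a cone point to special vertices (since the alcove vertices sit at $\varpi_i^\vee/a_i$), and a second factor of $k$ in passing from Hecke structure constants back to tensor-product multiplicities --- whose composition accounts for the constant $k^2$. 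Be aware, though, that what you have written is a roadmap rather than a proof: the two assertions that each step ``costs no more than a factor $k$'' (the path-folding/retraction construction producing a special-vertex triangle, and the transfer theorem between Hecke structure constants and multiplicities) are precisely the technical theorems of \cite{KM}, and your text asserts them rather than establishing them. Within the scope of this paper, which treats the theorem as a black box, that level of detail is appropriate; as a standalone proof it would not be complete.
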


Observe that for $G=\SL_n$, $k=1$. Belkale and Kumar \cite{BK:satoddorthsymp}  and Sam \cite{Sam} 
obtained better constants than $k^2$ for classical groups.



Two important conjectures in the topic are still open. 
The first one asserts that tensor product decompositions for simply-laced
groups satisfy the saturation property. The second one asserts that Theorem~\ref{th:satSL}
is satisfied for any $G$ if the weights are regular.  

\subsection{Saturation property for branching problem}

We fix maximal tori $T$ and $\hT$ and Borel subgroups $B$ and $\hB$ of $G$ and
$\hG$ such that $ \hB\supset \hT\supset T\subset B\subset\hB$.
We consider the set
$
LR(G,\hG)
$
of pairs $(\nu,\hnu)$ of dominant weights such that $(V_G(\nu)\otimes V_\hG(\hnu))^G\neq\{0\}$, that is, such that 
 $V_G(\nu)^*$ is a sub-$G$-module of $V_\hG(\hnu)$.
By definition $LR(G,\hG)$ is a subset of the character group $\Chi(T\times \hT)$ of $T\times\hT$.
By a result of Brion and Knop (see \cite{Elash}), $LR(G,\hG)$ is a {\it finitely generated subsemigroup}
of the lattice $\Chi(T\times \hT)$.
We say that the pair $(G,\hG)$ has the saturation property if $LR(G,\hG)$ is the intersection of some convex cone with some lattice. To make this more precise we consider
the subgroup $\ZZ LR(G,\hG)$ of  $\Chi(T\times \hT)$ generated by $LR(G,\hG)$.
The following statement describes the group $\ZZ LR(G,\hG)$.

\begin{theo}\label{th:LRgroup}
Let $\hZ$ denote the center of $\hG$.
Suppose that every connected, closed and normal subgroup of $\hG$ contained 
in $G$ is trivial.
  Then the group $\ZZ LR(G,\hG)$ is the set of pairs $(\nu,\hnu)\in \Chi(T)\times \Chi(\hT)$ such that 
$$
\nu(t).\hnu(t)=1
$$
for any $t\in \hZ\cap G$.
\end{theo}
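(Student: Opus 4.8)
The plan is to identify $\ZZ LR(G,\hG)$ as a group of characters of a suitable product of tori by a two-sided inclusion. First I would observe that $LR(G,\hG)$ — and hence the group it generates — lives inside $\Chi(T\times\hT)$, and that every element $(\nu,\hnu)$ of $LR(G,\hG)$ must satisfy $\nu(t)\hnu(t)=1$ for $t\in\hZ\cap G$: indeed, if $V_G(\nu)^*$ embeds in $V_\hG(\hnu)$, then an element $t$ of the center $\hZ$ of $\hG$ acts on $V_\hG(\hnu)$ by the scalar $\hnu(t)$ (the central character), and if moreover $t\in G$ it acts on the $G$-submodule $V_G(\nu)^*$ by the scalar $\nu^*(t)=\nu(t)^{-1}$; comparing gives $\hnu(t)=\nu(t)^{-1}$. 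This inclusion passes to the generated group, giving one containment.

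For the reverse inclusion, let $\Gamma$ denote the set of pairs $(\nu,\hnu)\in\Chi(T)\times\Chi(\hT)$ with $\nu(t)\hnu(t)=1$ for all $t\in\hZ\cap G$. I want to show $\Gamma\subseteq\ZZ LR(G,\hG)$. The strategy is to produce, for a spanning set of $\Gamma$, explicit elements of $LR(G,\hG)$ (or differences thereof). The natural building blocks are: (i) the pairs $(\nu,\hnu)$ coming from actual branching, e.g.\ $(\nu,\hnu)$ with $V_G(\nu)^*\subset V_\hG(\hnu)$ obtained by restricting highest-weight modules; (ii) in particular $(0,0)$, $(-w_0\mu|_T,\text{something})$, and pairs built from the fact that any dominant weight $\hnu$ of $\hG$ restricts to $T$ to give some $G$-constituent. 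A clean approach: the hypothesis that no nontrivial connected closed normal subgroup of $\hG$ lies in $G$ forces the restriction map $\Chi(\hT)\to\Chi(T)$ to have the property that $G\cdot\hZ$ has finite index behavior making $\hZ\cap G$ control exactly the obstruction; one then checks that the subgroup of $\Chi(T\times\hT)$ generated by pairs of the form $(\nu, -\hnu)$ with $\hnu|_T = \nu$ (which are automatically in $\ZZ LR$ by a Borel–Weil or PRV-type argument) together with pairs $(\chi,\chi^{-1})$ for $\chi$ a character of $\hG$ restricted appropriately, already spans $\Gamma$.

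The key technical input I would invoke is that $LR(G,\hG)$ generates a finite-index-free description: concretely, that $V_\hG(\hnu)$ restricted to $G$ always contains $V_G(\hnu|_T)^*$-dual-type constituents (e.g.\ the lowest or highest $T$-weight constituent), so that $(\,-w_0(\hnu|_T),\hnu)$ or a similar explicit pair lies in $LR(G,\hG)$. Differences of two such pairs $(\nu_1,\hnu_1)-(\nu_2,\hnu_2)$ with $\hnu_1-\hnu_2$ ranging over a generating set of $\Chi(\hT)$ then sweep out a subgroup of $\ZZ LR(G,\hG)$; combined with the characters of $\hG$ itself (which pair up as $(\chi|_T,\chi^{-1})$-type elements, living in $LR$ by tensoring with one-dimensional modules), these should generate all of $\Gamma$. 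The final arithmetic step is a lattice computation: verify that the subgroup so produced is exactly $\{(\nu,\hnu):\nu(t)\hnu(t)=1\ \forall t\in\hZ\cap G\}$, which reduces to checking that the annihilator of $\hZ\cap G$ in $\Chi(T\times\hT)$ is spanned by the explicit pairs, using that $\hZ\cap G$ is a diagonalizable group and Pontryagin-type duality between its character lattice and $\Chi(T\times\hT)$ modulo the span.

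The main obstacle I anticipate is the reverse inclusion, specifically producing enough explicit elements of $LR(G,\hG)$: one must show that for a cofinal family of dominant $\hnu$ the restriction $V_\hG(\hnu)|_G$ contains a prescribed $G$-constituent, and that the hypothesis on normal subgroups is exactly what guarantees the resulting pairs are not trapped in a proper subgroup of $\Gamma$ — i.e.\ that the obstruction to saturation at the level of the generated group is *only* the central one. Handling the case where $\hZ\cap G$ is disconnected (finite part) carefully, so that torsion in the character groups is tracked correctly, is where the argument needs the most care.
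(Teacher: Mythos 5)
Your forward inclusion ($\ZZ LR(G,\hG)$ is contained in the annihilator of $\hZ\cap G$) is correct and matches the easy half of the statement: the central character argument on $V_\hG(\hnu)$ and its $G$-submodule $V_G(\nu)^*$ is exactly right. The reverse inclusion, however, has a genuine gap, and it is the whole content of the theorem. The generators you propose do not have enough rank: differences of Cartan-component pairs $\bigl((\hnu|_T)^*,\hnu\bigr)$ sweep out only a subgroup of rank at most $\operatorname{rk}\Chi(\hT)$ (since $\hnu\mapsto(\hnu|_T)^*$ is linear, these differences all lie on the graph of a single homomorphism $\Chi(\hT)\to\Chi(T)$), and characters of $\hG$ contribute nothing when $\hG$ is semisimple; but the target group has rank $\operatorname{rk}\Chi(T)+\operatorname{rk}\Chi(\hT)$. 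Even after enlarging the generating set (every $V_G(\nu)^*$ does occur in some $V_\hG(\hnu)$, so one can realize every $\nu$ in the first coordinate), the step you defer as ``the final arithmetic step'' --- showing that the subgroup generated is the \emph{full} annihilator of $\hZ\cap G$ and not a finite-index subgroup of it, and identifying where the hypothesis on normal subgroups of $\hG$ enters --- is precisely the theorem, and your sketch gives no mechanism for it.

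The paper sidesteps the construction of explicit generators entirely. By Lemma~\ref{lem:LRgroup2}, $LR(G,\hG)$ is exactly the set of $T\times\hT$-weights of the algebra $\CC[\hG]^{U\times \hat{U}^-}$; hence $\ZZ LR(G,\hG)$ is automatically the full character group of the quotient of $T\times\hT$ by the kernel $H$ of this action, i.e.\ the full annihilator of $H$ in $\Chi(T\times\hT)$ --- no index issues can arise. All the work then goes into computing $H$: the open Bruhat cell identifies $\CC(\hG)^{U\times\hat{U}^-}$ with $\CC(\hat{U}/U\times\hT)$, reducing to the kernel $H'$ of the $T$-action on $\hat{U}/U$, and Lemma~\ref{lem:LRgroup1} (equality of kernels on a variety and on the tangent space at a fixed point) converts this to the kernel on $\hlg/\lg$ and then on $\hG/G$, namely $T\cap\bigcap_{\hat g}\hat g G\hat g^{-1}$. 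It is exactly here that the hypothesis on normal subgroups is used, to conclude $H'=\hZ\cap G$. If you want to salvage your approach you would need an argument of this duality type anyway; I recommend reorganizing around the observation that $\ZZ LR$ is the annihilator of the kernel of a torus action rather than trying to exhibit generators.
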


Note that Theorem~\ref{th:LRgroup} is announced in \cite{Brion:Bourbaki}.

\begin{rem}
The hypothesis done in Theorem~\ref{th:LRgroup} is not very restrictive. Indeed, for any pair $(G,\hG)$, let $H$ be the maximal connected, closed and normal subgroup of $\hG$ contained in $G$. Then, by taking a finite cover of $\hG$ and the neutral component of the inverse image of this cover in $G$, we can suppose that $\hG=H\times \hG_0$ and $G=H\times G_0$. Then $LR(G,\hG)=LR(G_0,\hG_0)$ and $(G_0,\hG_0)$ satisfies the hypothesis of Theorem~\ref{th:LRgroup}.

\end{rem}

\begin{defi}
  The semigroup $LR(G,\hG)$ (or the pair $(G,\hG)$) is said to have the {\it saturation property}
if for any pair of dominant weights $(\nu,\hnu)$ such that
\begin{enumerate}
\item $\forall t\in \hZ\cap G,\quad \nu(t).\hnu(t)=1$ and
\item $\exists N>0,\quad (V_G(N\nu)\otimes V_\hG(N\hnu))^G\neq\{0\}$,
\end{enumerate}
we have
$$
(V_G(\nu)\otimes V_\hG(\hnu))^G\neq\{0\}.
$$
\end{defi}

\subsection{Examples}
Guessing that this work can help to understand better  the saturation property for
branching rules (and maybe even for the tensor product decomposition), we study
this property in detail for some examples. We make a particular attention to the 
case when $G$ is spherical of minimal rank in $\hG$ (see \cite{Spherangmin} for 
a classification). Our motivation is that these branching rules have common 
properties with the tensor product decomposition 
(see for example \cite{MPR,MPR2}). 
We surprisingly observed that all the computed examples have the saturation 
property. 

\begin{theo}\label{th:explesat}
The pairs $(\Spin_{2n-1},\Spin_{2n})$, $(\SL_3, G_2)$, $(G_2,\Spin_7)$, $(\Spin_9,F_4)$, $(F_4,E_6)$, 
$(\Sp_4,\SL_4)$, $(\Sp_6,\SL_6)$,$(\Sp_8,\SL_8)$,$(\Sp_{10},\SL_{10})$  have
the saturation property.
\end{theo}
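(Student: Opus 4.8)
The plan is to recognize that, for each pair $(G,\hG)$ in the list, the saturation property is nothing but the \emph{normality} of the finitely generated affine semigroup $LR(G,\hG)$, and then to verify this normality by an explicit computation. Write $\mathcal{C}=\RR_{\geq 0}\,LR(G,\hG)$ for the real convex cone generated by the semigroup and $\Gamma=\ZZ LR(G,\hG)$ for the group it generates inside $\Chi(T\times\hT)$. By Theorem~\ref{th:LRgroup}, condition (1) in the definition of the saturation property, that $\nu(t)\hnu(t)=1$ for all $t\in\hZ\cap G$, is exactly the condition that $(\nu,\hnu)\in\Gamma$. Condition (2), the existence of $N>0$ with $(V_G(N\nu)\otimes V_\hG(N\hnu))^G\neq\{0\}$, says exactly that $(\nu,\hnu)$ lies in $\mathcal C$ (clear denominators in a rational nonnegative combination of generators, and conversely). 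Since $LR(G,\hG)\subseteq\mathcal C\cap\Gamma$ automatically, the pair has the saturation property if and only if $LR(G,\hG)=\mathcal C\cap\Gamma$, which is the normality of the affine semigroup.

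First I would, for each of the finitely many pairs, compute a finite generating set of $LR(G,\hG)$. By the cited result of Brion and Knop the semigroup is finitely generated, so it admits a finite Hilbert basis; I would obtain it by computing the branching multiplicities $\dim(V_G(\nu)\otimes V_\hG(\hnu))^G$ with \cite{sage} for all $(\nu,\hnu)$ up to a bounded size and extracting the indecomposable elements. Next I would feed these generators to \cite{4ti2} to produce the facet inequalities of $\mathcal C$ and a basis of $\Gamma$, and then to compute the Hilbert basis of the saturated semigroup $\mathcal C\cap\Gamma$. The saturation property reduces to a single finite check: each element of the Hilbert basis of $\mathcal C\cap\Gamma$ must itself lie in $LR(G,\hG)$, i.e.\ must have a nonzero branching multiplicity. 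If this holds then $\mathcal C\cap\Gamma$, being generated as a semigroup by its Hilbert basis, is contained in $LR(G,\hG)$, and equality, hence saturation, follows.

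The main obstacle is twofold. First, one must be sure the generating set produced for $LR(G,\hG)$ is genuinely complete: the search for generators is only a priori bounded, and for the larger pairs such as $(F_4,E_6)$ and $(\Sp_{10},\SL_{10})$ the ambient lattice $\Chi(T\times\hT)$ has rank $10$, so both the multiplicity computations and the Hilbert-basis computation live in a high-dimensional cone and form the computationally delicate part. Second, and more conceptually, there is no uniform reason to expect normality here: unlike the $\SL_n$ tensor case there is no honeycomb-type integral model available, so each pair must be settled on its own, and the content of Theorem~\ref{th:explesat} is precisely that the finite verification succeeds in every listed case. I would therefore organize the proof as a table recording, for each pair, the generators of $LR(G,\hG)$, the facets of $\mathcal C$, and the confirmation that the Hilbert basis of $\mathcal C\cap\Gamma$ consists of effective branchings.
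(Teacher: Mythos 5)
Your reduction of the saturation property to the normality statement $LR(G,\hG)=\mathcal C\cap\Gamma$ is correct and is exactly the reformulation the paper uses, and the final finite check (every element of the Hilbert basis of $\mathcal C\cap\Gamma$ lies in $LR(G,\hG)$) is also the paper's last step. The gap is in how you propose to get your hands on the cone $\mathcal C$. You want to generate $LR(G,\hG)$ by computing branching multiplicities ``for all $(\nu,\hnu)$ up to a bounded size'' and taking the cone spanned by what you find. There is no a priori degree bound on the generators of this semigroup, so this procedure only ever produces an \emph{inner} approximation $\mathcal C'\subseteq\mathcal C$ with no certificate of equality. That is fatal for the logic: saturation requires $\mathcal C\cap\Gamma\subseteq LR(G,\hG)$ for the \emph{true} cone, and verifying that the Hilbert basis of $\mathcal C'\cap\Gamma$ consists of effective branchings proves nothing about lattice points of $\mathcal C\setminus\mathcal C'$. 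You name this as an ``obstacle'' but offer no way around it, so the argument as written does not close.

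The paper closes precisely this gap by sandwiching the cone from both sides. The outer bound comes from Theorem~\ref{th:desccone} (Ressayre's GIT description): for each admissible one-parameter subgroup $\lambda_i$ and each Levi-movable pair $(w,\hw)\in W^{P_i}\times\hW^{\hP_i}$, the inequality $\langle w\lambda_i,\nu\rangle+\langle\hw\lambda_i,\hnu\rangle\leq 0$ holds on all of $\QQ_{\geq 0}LR(G,\hG)$, and a suitable subset of these inequalities is a complete, irredundant list of facets. These inequalities are computed by Schubert calculus and linear algebra on tangent spaces, not by enumerating representations, so they give a certified cone $\mathcal C\supseteq\QQ_{\geq 0}LR(G,\hG)$. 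The inner bound is then obtained by exhibiting the (finitely many) primitive ray generators of $\mathcal C$ as actual elements of $LR(G,\hG)$, forcing $\mathcal C=\QQ_{\geq 0}LR(G,\hG)$; only after this is the Hilbert-basis check meaningful. You would need to import this facet description (or some other certified outer bound) to make your strategy a proof. A second, smaller issue: $(\Spin_{2n-1},\Spin_{2n})$ is an infinite family, so no finite computation of the kind you describe can cover it; the paper instead uses the classical interlacing description of the branching cone, observes it is a simplex, and writes down its Hilbert basis explicitly for all $n$.
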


Along the way, we compute many other datum attached to the semigroup $LR(G,\hG)$:
inequalities and rays for the generated cone, Hilbert basis.\\

Regarding Theorem~\ref{th:explesat}, it is natural to extend the conjecture of saturation of tensor product decompositions of simply laced groups. Indeed,
consider the set $\Wt_T(\hlg/\lg)$ of  non trivial weights of $T$ in the quotient $\hlg/\lg$
of the Lie algebras of $\hat G$ and $G$. 

\bigskip
\begin{ask}
Assume that $\hG/G$ is spherical of minimal rank and that $W$ acts transitively on $\Wt_T(\hlg/\lg)$.  
  
  Does  $(G,\hat G)$ have the saturation property?
\end{ask}

This paper reduces the above question to two cases: the tensor product decomposition for
simple simply laced groups (the classical conjecture) and $Sp_{2n}\subset Sl_{2n}$. 
This last case is checked for $n\leq 5$.

\section{Proof of Theorem~\ref{th:LRgroup} and a first example}\label{sec:LRgroup}


\begin{lem}\label{lem:LRgroup1}
Let $X$ be an algebraic variety and let $G$ be a reductive group acting
 on $X$ with a fixed point $x$. Then the actions of $G$ on $X$ and on $T_xX$ have the same kernel.
\end{lem}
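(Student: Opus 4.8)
The plan is to reduce the statement to a linearization/local-action argument. Let $K\subseteq G$ be the kernel of the action of $G$ on $X$, and let $K'\subseteq G$ be the kernel of the action on the tangent space $T_xX$. Since $x$ is a fixed point, $g\cdot x = x$ for all $g\in G$, so the differential at $x$ gives a linear action of $G$ on $T_xX$; an element of $K$ acts trivially on all of $X$, hence fixes $x$ and acts trivially on $T_xX$, giving $K\subseteq K'$. The content is the reverse inclusion.

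For $K'\subseteq K$, the first step is to reduce to an affine neighborhood: since $G$ is reductive and fixes $x$, by Sumihiro's theorem (or directly, since we may replace $X$ by the $G$-stable affine open obtained from a $G$-stable affine chart around the fixed point) we may assume $X$ is affine and embedded $G$-equivariantly in a $G$-module $V$ with $x=0$. Then $T_xX \hookrightarrow T_0V = V$ is a $G$-submodule, but this alone is not enough — an element acting trivially on $T_xX$ need not act trivially on $V$. The key step is instead to work with the local ring or the coordinate ring directly: let $A=\CC[X]$ with maximal ideal $\mathfrak m$ at $x$, which is $G$-stable. An element $g\in K'$ acts trivially on $\mathfrak m/\mathfrak m^2$ (the cotangent space, dual to $T_xX$). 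Since $G$ is reductive, the $G$-action on $\mathfrak m$ admits a $G$-stable complement to $\mathfrak m^2$, so the induced action on each $\mathfrak m^k/\mathfrak m^{k+1}$ is a quotient of a tensor power of $\mathfrak m/\mathfrak m^2$, hence $g$ acts trivially on each graded piece, hence trivially on $A/\mathfrak m^k$ for all $k$.

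The remaining step is to pass from triviality on all $A/\mathfrak m^k$ to triviality on $A$, i.e. to pass from the formal/local statement to the global one. Here I would use that the $G$-action on $A$ is \emph{locally finite}: every element $f\in A$ lies in a finite-dimensional $G$-submodule $M\subseteq A$. On $M$, the element $g\in K'$ acts as a linear automorphism; I claim it acts as the identity. Indeed, consider the filtration of $M$ by $M\cap\mathfrak m^k$; since $\bigcap_k \mathfrak m^k = 0$ (as $X$ is a variety, so $A$ is Noetherian and we may localize, or work on each irreducible component), this filtration is finite and exhaustive on the finite-dimensional space $M$, and $g$ acts trivially on each successive quotient by the previous paragraph, so $g$ acts unipotently on $M$. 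But $g$ also acts semisimply on $M$ when $g$ is semisimple, and in general: the subgroup of $\GL(M)$ through which $G$ acts is the image of a reductive group, so $g$ lies in a reductive subgroup of $\GL(M)$ on which... — more cleanly: the set of $g\in G$ acting trivially on $A/\mathfrak m^k$ for all $k$ is a closed subgroup $H$ of $G$, it acts trivially on each finite-dimensional submodule $M$ modulo the finite filtration above, hence $H$ acts on each $M$ by unipotent operators, so the image of $H$ in $\GL(M)$ is a unipotent normal... — the simplest finish: since the $G$-action on $M$ factors through a reductive group and $H$ maps to unipotent elements, and a reductive group has no nontrivial normal unipotent subgroup image, but more directly $H$ is itself not necessarily reductive, so instead note that $g$ acting unipotently and the whole group $\overline{G\to\GL(M)}$ being reductive forces $g=\mathrm{id}$ on $M$ only if $g$ is semisimple — to avoid this subtlety I would instead argue: $g$ fixes $x$ and acts trivially on the cotangent tower, so $g$ acts trivially on the completion $\hat A_{\mathfrak m}$; the natural map $A\to \hat A_{\mathfrak m}$ is injective on the irreducible component(s) through $x$ and — to get injectivity globally — one reduces to $X$ irreducible (the kernels on each component of $X$ all equal $K$ since $x$ may be taken on each relevant component after noting the kernel is the intersection, and a connected $G$ permutes components fixing the one containing $x$). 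Then $A\hookrightarrow\hat A_{\mathfrak m}$ and $g$ acts trivially on the target, hence on $A$, so $g\in K$.

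The main obstacle is the last step: triviality on all infinitesimal neighborhoods $A/\mathfrak m^k$ does not immediately give triviality on $A$ unless one knows $\bigcap_k\mathfrak m^k=0$, which requires $X$ to be (at least locally) a variety and some care with reducibility and with the fact that a connected reductive $G$ fixing $x$ stabilizes the irreducible component(s) of $X$ through $x$; the reduction to the irreducible affine case is where the hypotheses "algebraic variety" and "reductive $G$" are really used, together with the complete reducibility that makes the graded pieces of $\mathfrak m^\bullet$ controlled by the cotangent space.
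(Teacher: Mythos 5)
Your argument is correct and follows essentially the same route as the paper's proof: an element acting trivially on the cotangent space $\mathfrak m/\mathfrak m^2$ acts trivially on each graded piece $\mathfrak m^k/\mathfrak m^{k+1}$ (via symmetric powers), hence on each finite-dimensional quotient by powers of the maximal ideal, and one concludes by Krull's intersection theorem. The paper streamlines the two points where you hesitate by working in the local ring $\mathcal O_{X,x}$ rather than in $\CC[X]$ --- there $\bigcap_{n}\mathfrak m_x^n=0$ holds unconditionally, so no fuss about components of $X$ away from $x$ is needed at that step --- and it dispatches your unipotent-versus-trivial worry in one line: $\mathcal O_{X,x}/\mathfrak m_x^{n+1}$ is a finite-dimensional rational module over the reductive group $G$, hence semisimple, so triviality on the associated graded pieces already forces triviality on the quotient itself.
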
 

\begin{proof}
It is enough to prove that if an element $g$ of $G$ acts trivially on $T_xX$, then it also acts trivially on the local ring $\mathcal{O}_{X,x}$. Denote by $\mathfrak{m}_x$ the maximal ideal of $\mathcal{O}_{X,x}$. Then $g$ acts trivially on $\mathfrak{m}_x/\mathfrak{m}_x^2=(T_xX)^*$. 
It also acts trivially on each symmetric power $S^n(\mathfrak{m}_x/\mathfrak{m}_x^2)$ and each quotient $\mathfrak{m}_x^n/\mathfrak{m}_x^{n+1}$. Now, since $\mathcal{O}_{X,x}/\mathfrak{m}_x^{n+1}$ is a rational $G$-module of finite dimension, it is semi-simple and then $g$ acts trivially on it. We conclude by the fact that $\cap_{n\geq 1}\mathfrak{m}_x^n=\{0\}$.
\end{proof}

Let $U$ (resp. $\hat{U}$) be the unipotent radical of $B$ (resp. $\hB$) and let $\hat{U}^-$ be the unipotent radical of the Borel $\hB^-$ opposite to $\hB$.  And denote by $\mathfrak{g}$, $\hat{\mathfrak{g}}$, $\mathfrak{u}$, $\hat{\mathfrak{u}}$, $\mathfrak{t}$ and $\hat{\mathfrak{t}}$ the Lie algebras of $G$, $\hG$, $U$, $\hat{U}$, $T$ and $\hT$ respectively. 

If $V$ is a $G$-module, then since $T$ normalizes $U$, $T$ acts on $V^U$. We denote by $V^U_\nu$ the subspace of $V^U$ on which $T$ acts with weight $\nu$. We generalize in a natural way this notation to $G\times\hG$-modules $V$ with the unipotent radical $U\times \hat{U}^-$ of $B\times \hB^-$.


\begin{lem}\label{lem:LRgroup2}
Consider the actions by right multiplications of $U$ and $\hat U^-$ on $G$ and $\hG$.
The morphism of algebras given by:
$$\begin{array}{ccc}
(\CC[G]^U\otimes\CC[\hG]^{\hat{U}^-})^G & \longrightarrow & \CC[\hG]^{U\times \hat{U}^-}\\
\sum_i\phi_i\otimes\psi_i & \longmapsto & \sum_i\phi_i(e)\psi_i
\end{array}$$
where $G$ acts diagonally on $\CC[G]^U\otimes\CC[\hG]^{\hat{U}^-}$ and where $e$ is the unity in $G$, is an isomorphism.

In particular, $((V_G(\nu))^*\otimes V_\hG(\hat{\nu}))^G$ is isomorphic to $\CC[\hG]^{U\times \hat{U}^-}_{\nu,-\hat{\nu}}$.
\end{lem}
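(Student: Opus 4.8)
The plan is to give the map a geometric meaning and then read off the \og in particular\fg\ statement from the algebraic Peter--Weyl theorem. Write $Y=\hG/\hat U^-$ and recall that $\CC[G]^U=\CC[G/U]$ and $\CC[\hG]^{\hat U^-}=\CC[Y]$, the residual $G$-action being in both cases by left translation. Since $\CC[G/U]\otimes\CC[Y]=\CC[G/U\times Y]$, the left--hand side is the algebra $\CC[G/U\times Y]^G$ of invariants for the \emph{diagonal} left $G$-action, and the stated map is nothing but restriction of such an invariant function to the slice $\{\bar e\}\times Y$, where $\bar e=eU$: indeed $\sum_i\phi_i\otimes\psi_i$ restricts to $\sum_i\phi_i(e)\psi_i$ because each $\phi_i$ is right $U$-invariant. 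So everything reduces to the classical identity
\[
\CC[G/U\times Y]^G\;\cong\;\CC[Y]^U ,
\]
with restriction to $\{\bar e\}\times Y$ as the isomorphism.

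To prove this identity I would use the associated bundle. The map $G\times_U Y\to G/U\times Y$, $[g,y]\mapsto(gU,gy)$, is a $G$-equivariant isomorphism, where $G$ acts on the source by left multiplication on the first factor and on the target diagonally. Every point is of the form $[g,y]=g\cdot[e,y]$, so a $G$-invariant regular function $f$ is determined by $\phi(y):=f([e,y])$; this shows restriction is injective. Using $[e,uy]=[u,y]=u\cdot[e,y]$ for $u\in U$ together with $G$-invariance one checks that $\phi$ is $U$-invariant, and conversely every $\phi\in\CC[Y]^U$ extends to the invariant function $[g,y]\mapsto\phi(y)$; hence restriction is onto $\CC[Y]^U$. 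Transporting back through the isomorphism, the slice becomes $\{\bar e\}\times Y$ and restriction becomes exactly the stated evaluation map, which is visibly a morphism of algebras. Finally $\CC[Y]^U=(\CC[\hG]^{\hat U^-})^U=\CC[\hG]^{U\times\hat U^-}$.

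For the \og in particular\fg, decompose everything under Peter--Weyl and the residual torus actions. From $\CC[G]=\bigoplus_\lambda V_G(\lambda)^*\boxtimes V_G(\lambda)$ (left times right $G$-action), taking right $U$-invariants selects the highest weight line of the second factor, giving $(\CC[G]^U)_\nu\cong V_G(\nu)^*$ as a left $G$-module, the subscript being the right $T$-weight. Similarly, from $\CC[\hG]=\bigoplus_{\hlambda}V_\hG(\hlambda)^*\boxtimes V_\hG(\hlambda)$, right $\hat U^-$-invariants select the lowest weight line, of right $\hT$-weight $w_0\hlambda$; taking $\hlambda=-w_0\hnu$ and using $V_\hG(\hlambda)^*\cong V_\hG(\hnu)$ gives $(\CC[\hG]^{\hat U^-})_{-\hnu}\cong V_\hG(\hnu)|_G$. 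Since the first isomorphism intertwines the residual $T\times\hT$-actions, restricting it to weight $(\nu,-\hnu)$ yields $(V_G(\nu)^*\otimes V_\hG(\hnu))^G\cong\CC[\hG]^{U\times\hat U^-}_{\nu,-\hnu}$, the diagonal $G$ coming from the invariants on the source.

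The idea is standard; the work is in the bookkeeping, which is where I expect the main obstacle. First, one must justify the bundle argument although $G/U$ and $\hG/\hat U^-$ are only quasi-affine, so that $G$-invariant regular functions really do restrict faithfully and surjectively onto $\CC[Y]^U$. Second, one must be careful that the $T$-grading of the source, produced by right translation on the factor $G/U$, is intertwined through $G$-invariance with the \emph{left} $T$-action on the target $\CC[\hG]^{U\times\hat U^-}$, and that the action of $w_0$ and the dualizations are tracked correctly, so that the lowest weight line of $V_\hG(\hlambda)$ produces exactly $V_\hG(\hnu)|_G$ in right $\hT$-weight $-\hnu$ rather than some twist of it.
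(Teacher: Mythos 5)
Your argument is, at its core, the same as the paper's: your trivialization $G\times_U Y\to G/U\times Y$, $[g,y]\mapsto (gU,gy)$, is exactly the untwisting map $(g,\hat g)\mapsto(g,g\hat g)$ whose comorphism $f\mapsto\bigl((g,\hat g)\mapsto f(g\hat g)\bigr)$ the paper writes down as the explicit inverse, and your restriction to the slice $\{\bar e\}\times Y$ is the paper's evaluation at $e$; the Peter--Weyl bookkeeping for the \og in particular\fg\ is also identical. The difference is that the paper never leaves the affine world: it works with $\CC[G]\otimes\CC[\hG]=\CC[G\times\hG]$ and takes $U$-, $\hat U^-$- and $G$-invariants afterwards, whereas you pass first to the quasi-affine quotients $G/U$ and $Y=\hG/\hat U^-$. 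That detour is where your one unresolved point lives: the identification $\CC[G/U]\otimes\CC[Y]=\CC[G/U\times Y]$ is not a formal fact for quasi-affine varieties and you flag it without closing it. It is true here --- by Grosshans' theorem $\CC[G]^U$ is finitely generated and $G/U$ sits in the affine variety $\overline{G/U}={\rm Spec}\,\CC[G]^U$ with boundary of codimension at least two, and likewise for $Y$, so regular functions on the product extend and the K\"unneth identity holds --- but you should either supply this or, more simply, rearrange the order of operations as the paper does: since taking invariants under the commuting groups $U$, $\hat U^-$ and (diagonal) $G$ can be done in any order inside $\CC[G\times\hG]$, the whole statement reduces to transporting $\CC[G\times\hG]^{U\times\hat U^-\times G_{\rm diag}}$ through the automorphism $(g,\hat g)\mapsto(g,g\hat g)$, after which the $G$-invariance becomes invariance under left translation on the first factor alone and the slice argument is immediate. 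With that repair your proof is complete and matches the paper's.
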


\begin{proof}
The inverse of the morphism comes from:

$$\begin{array}{ccc}
\CC[\hG] & \longrightarrow & \CC[G\times \hG]\simeq\CC[G]\otimes\CC[\hG]\\
f & \longmapsto & ((g,\hat{g})\mapsto f(g\hat{g})).
\end{array}$$

For the last statement, we use the decompositions of $\CC[G]$ and $\CC[\hG]$: 
$$\CC[G]=\bigoplus_{\nu\in X(T)^+}V_G(\nu)^*\otimes V_G(\nu)\,\mbox{ and }\,\CC[\hG]=\bigoplus_{\hnu\in X(\hat T)^+}V_\hG(\hnu)\otimes V_\hG(\hnu)^*.$$
Remark also that $V_G(\nu)^U$ is a line on which $T$ acts with weight $\nu$ and $(V_\hG(\hat{\nu})^*)^{\hat{U}^-}$ is a line on which $\hT$ acts with weight $-\hat{\nu}$.
\end{proof}

\begin{proof}[Proof of Theorem~\ref{th:LRgroup}]
Denote by $\nu^*$ the highest weight of $V_G(\nu)^*$.
Then we define 
$$
H:=\{(t,\hat{t})\in T\times\hT\,\mid\,\nu^*(t)=\hnu(\hat{t})\mbox{ for any }(\nu,\hnu)\in LR(G,\hG)\}.
$$ 
By Lemma~\ref{lem:LRgroup2}, $H$ is the kernel of the action of $T\times\hT$ on $\CC[\hG]^{U\times \hat{U}^-}$, 
hence also on $\CC(\hG)^{U\times \hat{U}^-}$.
The Bruhat decomposition gives an open immersion of $\hat{U}\times\hT\times\hat{U}^-$ in $\hG$. 
Then $\CC(\hG)^{U\times \hat{U}^-}$ is isomorphic to $\CC(\hat{U}/U\times\hT)$.
Then $H$ is the kernel of the action of $T\times\hT$ on $\hat{U}/U\times\hT$ given by $(t,\hat{t})\cdot (\hat{u}U,\hat{x})=(t\hat{u}t^{-1}U,t\hat{x}\hat{t}^{-1})$. We deduce easily that $H=\{(t,t)\in T\times T\,\mid\,t\in H'\}$, where $H'$ is the kernel of the action (by conjugation) of $T$ on $\hat{U}/U$.
Since $U/U$ is fixed by this action, by Lemma~\ref{lem:LRgroup1}, $H'$ is also the kernel of the action of $T$ on the quotient of Lie algebras $\hat{\mathfrak{u}}/\mathfrak{u}$ and then also the kernel of the action on $\hat{\mathfrak{g}}/\mathfrak{g}\simeq (\hat{\mathfrak{u}}/\mathfrak{u})\oplus(\hat{\mathfrak{t}}/\mathfrak{t})\oplus (\hat{\mathfrak{u}}/\mathfrak{u})^*$. Still with Lemma~\ref{lem:LRgroup1}, $H'$ is the kernel of the action (by conjugation) of $T$ on $\hG/G$, and we obtain 
$$
H'=T\cap\bigcap_{\hat{g}\in\hG}\hat{g}G\hat{g}^{-1}.
$$

Now, $\cap_{\hat{g}\in\hG}\hat{g}G\hat{g}^{-1}$ is a closed and normal subgroup of $\hG$ contained in $G$.  
Hence the hypothesis implies that the intersection  $\cap_{\hat{g}\in\hG}\hat{g}G\hat{g}^{-1}$ is finite (and normal). 
Then, since $\hG$ is reductive, it is contained in $\hZ$, and $H'\subset\hZ$. 
Conversely $\hZ$ acts trivially on $\hG/G$, so that $H'=\hZ\cap T=\hZ\cap G$.
Finally,
$$
H=\{(t,t)\in T\times T\,\mid\,t\in\hZ\cap G\}.
$$

We then deduce that the group $\ZZ LR(G,\hG)$ is the set of pairs $(\nu,\hnu)\in \Chi(T)\times \Chi(\hT)$ such that 
$$
\nu^*(t)=\hnu(t)
$$
for any $t\in \hZ\cap G$.

But $\nu^*=-w_0\nu$, where $w_0$ is the longest element of the Weyl group of $G$, and then for all element of the center of $G$ (in particular for all $t\in\hZ\cap G$), we have $\nu^*(t)=-w_0\nu(t)=-\nu(w_0tw_0^{-1})=-\nu(t)=\nu(t^{-1})$. This concludes the proof of Theorem~\ref{th:LRgroup}.
\end{proof}

{\bf Example:} Here $G=\Spin_{2n-1}$ and $\hG=\Spin_{2n}$.

We denote by $(\varepsilon_1,\dots,\varepsilon_n)$ the standard (orthogonal) basis of the weight lattice of the maximal torus  of $\SO_{2n}$ 
(with Bourbaki's notation).
Then $X(\hT)$ is the set of $\hat \nu=\hnu_1\varepsilon_1+\cdots+\hnu_n\varepsilon_n$ for some rational numbers $\hnu_i$ 
such that $(2\hnu_1,\dots,2\hnu_{n})$ are integers of same parity.
Similarly $X(T)$ is the set of $\nu=\nu_1\varepsilon_1+\cdots+\nu_{n-1}\varepsilon_{n-1}$
such that $(2\nu_1,\dots,2\nu_{n-1})$ are integers of same parity.
The weights $\nu$ and $\hnu$ are dominant if and only if 
$$
\nu_1\geq\nu_2\geq\cdots \geq\nu_{n-1}\geq 0\quad\mbox{and}\quad
\hnu_1\geq\hnu_2\geq\cdots \geq\hnu_{n-1}\geq
|\hnu_n|.
$$

The center of $G$ is isomorphic to $\ZZ/2\ZZ$.
By Theorem~\ref{th:LRgroup}, $(\nu,\hnu)$ belongs to $\ZZ LR(\Spin_{2n-1},\Spin_{2n})$ if and only if the integers $2\nu_i$ and $2\hnu_j$ have all the same parity.

The convex cone generated by $LR(\Spin_{2n-1},\Spin_{2n})$ in $(\Chi(T)\times \Chi(\hT))_\QQ$ is already given in~\cite{FH} by the following irredundant $2n-1$ inequalities:
$$
\hnu_1\geq\nu_1\geq\hnu_2\geq\nu_2\geq\cdots \geq\nu_{n-1}\geq
|\hnu_n|,
$$
in particular it is a simplex.
Then, an Hilbert basis of this cone in $\ZZ LR(\Spin_{2n-1},\Spin_{2n})$ is easily computable and given by all the following sequences with at least two 0 and one 1
$$
1\geq \cdots \geq 1\geq 0\geq\cdots \geq |0|,
$$
and the two sequences
$$
\frac 1 2\geq \cdots \geq \frac 1 2 \geq \frac 1 2\mbox{ and }\frac 1 2\geq \cdots \geq \frac 1 2 \geq -\frac 1 2.
$$
These $2n-1$ elements correspond to the following decompositions :

\begin{itemize}
\item $V(\hat\varpi_i)=V(\varpi_{i-1})\oplus V(\varpi_i)$ for $1\leq
  i\leq n-2$; (by convention $V(\varpi_0)$ is the trivial representation $\CC$)
\item $V(\hat\varpi_{n-1}+\hat\varpi_{n})$ contains $V(\varpi_{n-2})$;
\item $V(\hat\varpi_{n-1})=V(\varpi_{n})$;
\item $V(\hat\varpi_{n})=V(\varpi_{n})$.
\end{itemize}

We can conclude that the pair $(\Spin_{2n-1},\Spin_{2n})$ has the saturation property.
We also remark that, any inequality coming from dominance is redundant.\\

In all others examples we need another strategy to study the semigroup, the cone and the saturation property. We explain this in the following section.

\section{Method to study several examples}

\subsection{Levi-movability}

Recall that  $G\subset\hG$ are two complex connected reductive groups.
Let $\lambda$ be a one-parameter subgroup (1-ps) of $T$.
The set of $g\in G$ such that $\lim_{t\to 0}\lambda(t)g\lambda(t^{-1})$
exists, is a parabolic subgroup $P$ of $G$. Since $\lambda$ is also
a 1-ps of $\hG$, it also defines a parabolic subgroup $\hP$ of $\hG$.
Note that $P$ is contained in $\hP$, then we 
consider the immersion $\iota\,:\,G/P\longto \hG/\hP$
and the induced comorphism 
$$
\iota^*\,:\,H^*(\hG/\hP,\RR)\longto H^*(G/P,\RR)
$$
in cohomology.

Let $\Ta$ (resp. $\hTa$) denote the tangent space of $G/P$
(resp. $\hG/\hP$) at the point $P/P$ (resp. $\hP/\hP$).
We also denote  by $\iota$ the immersion of $\Ta$ in $\hTa$.

Let $W_P$ denote the Weyl group of $P$ and let $W^P$ be the set of minimal 
length representatives of the cosets of $W/W_P$.
Let $w\in W^P$.
Set $\Lambda_w=\overline{w^{-1}BwP/P}$ and $\Ta_w=T_{P/P}\Lambda_w$. For $\hw\in \hW^\hP$, we define as before $\Lambda_\hw\subset \hG/\hP$ and $\hTa_\hw$.
We assume that 
\begin{eqnarray}
  \label{eq:1}
  \codim(\Lambda_w,G/P)+\codim(\Lambda_\hw,\hG/\hP)=\dim(G/P).
\end{eqnarray}

\begin{defi}
The pair $(w,\hw)$ is said to be {\it Levi-movable} if there exists
$\hl\in\hL$ such that
\begin{eqnarray}
  \label{eq:2}
\iota(\Ta_w)\cap \hl\hTa_\hw=\{0\}.
\end{eqnarray}
\end{defi}

Let $\sigma_w\in  H^*(G/P,\RR)$ (resp. $\sigma_\hw\in H^*(\hG/\hP,\RR)$) 
denote the cohomology class of $\Lambda_w$ (resp. $\Lambda_\hw$).
Let $[pt]$ denote the class of the point in $H^*(G/P,\RR)$.
An important consequence of Levi-movability of $(w,\hw)$ is the following
nonvanishing:
$$
\iota^*(\sigma_\hw).\sigma_w=c[pt]\mbox{ for some positive integer }c.
$$

The action of $\lambda$ induces decompositions 
$$
\Ta=\bigoplus_{k<0}\Ta^k,\quad \mbox{and} \quad \hTa=\bigoplus_{k<0}\hTa^k;
$$
and 
$$
\Ta_w=\bigoplus_{k<0}\Ta^k_w,\quad \mbox{and}  \quad\hTa_\hw=\bigoplus_{k<0}\hTa^k_\hw.
$$
The following result is a useful observation.

\begin{lem}
 \label{lem:Lmovdecompose}
The pair  $(w,\hw)$ is  Levi-movable if and only if 
$$
\forall k\in\ZZ_{<0}\quad\exists \hl\in\hL\quad \iota(\Ta_w^k)\cap \hl\hTa_\hw^k=\{0\}.
$$
In particular, if $(w,\hw)$ is  Levi-movable then 
\begin{eqnarray}
  \label{eq:gdimcond}
  \forall k\in\ZZ_{<0}\quad \dim(\Ta_w^k)+\dim(\hTa_\hw^k)=\dim(\hTa^k).
\end{eqnarray}
\end{lem}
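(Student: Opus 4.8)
The plan is to exploit that the one-parameter subgroup $\lambda$ grades every space in sight and that elements of the Levi subgroup $\hL$ respect this grading. Since $\lambda$ lies in $T\subset G$, the inclusion $G\subset\hG$ is $\lambda$-equivariant, so $\iota(\Ta^k)\subseteq\hTa^k$ and in particular $\iota(\Ta_w^k)\subseteq\hTa^k$. Moreover $\hL$, being the Levi subgroup attached to $\lambda$, centralizes $\lambda$; hence every $\hl\in\hL$ commutes with the $\lambda$-action and preserves each graded piece $\hTa^k$. Therefore both $\iota(\Ta_w)=\bigoplus_k\iota(\Ta_w^k)$ and $\hl\hTa_\hw=\bigoplus_k\hl\hTa_\hw^k$ are \emph{graded} subspaces of $\hTa=\bigoplus_k\hTa^k$, and the intersection of two graded subspaces splits degree by degree:
$$
\iota(\Ta_w)\cap\hl\hTa_\hw=\bigoplus_{k<0}\bigl(\iota(\Ta_w^k)\cap\hl\hTa_\hw^k\bigr).
$$
Thus the left-hand side is $\{0\}$ if and only if each summand is. This already gives the ``only if'' implication, a single Levi-moving $\hl$ serving for every $k$ at once.

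For the ``if'' direction I would run a density argument in $\hL$. Fix $k$ and consider the morphism $\hL\to\Hom\bigl(\iota(\Ta_w^k)\oplus\hTa_\hw^k,\,\hTa^k\bigr)$ sending $\hl$ to the linear map $(a,b)\mapsto a-\hl\cdot b$; its kernel is isomorphic, via the first projection, to $\iota(\Ta_w^k)\cap\hl\hTa_\hw^k$. Hence the set $U_k:=\{\hl\in\hL:\iota(\Ta_w^k)\cap\hl\hTa_\hw^k=\{0\}\}$ is precisely the locus where this map is injective, which is Zariski open (nonvanishing of the appropriate maximal minors). By hypothesis each $U_k$ is nonempty, and there are only finitely many weights $k$. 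Since $\hL$ is a connected, hence irreducible, algebraic group, the finite intersection $\bigcap_k U_k$ of nonempty open subsets is nonempty. Any $\hl$ in it meets the per-degree condition for all $k$ simultaneously, so $\iota(\Ta_w)\cap\hl\hTa_\hw=\{0\}$ by the displayed decomposition, i.e. $(w,\hw)$ is Levi-movable.

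For the dimension equality \eqref{eq:gdimcond}, the $\hl$ just produced gives $\iota(\Ta_w^k)\cap\hl\hTa_\hw^k=\{0\}$ for each $k$; since $\iota$ is injective and the $\hl$-action is linear, this yields $\dim\Ta_w^k+\dim\hTa_\hw^k\leq\dim\hTa^k$ for every $k$. Summing $\codim(\Lambda_w,G/P)=\sum_k(\dim\Ta^k-\dim\Ta_w^k)$ together with the analogous identity for $\hw$, the codimension hypothesis \eqref{eq:1} becomes $\sum_k(\dim\hTa^k-\dim\Ta_w^k-\dim\hTa_\hw^k)=0$: a sum of nonnegative integers that must vanish term by term, which gives the claimed equality for every $k$. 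The one genuinely delicate step throughout is the passage from ``for each $k$ some $\hl$'' to ``one $\hl$ for all $k$''; it rests entirely on the irreducibility of $\hL$, which is exactly what lets finitely many nonempty open conditions be satisfied at once.
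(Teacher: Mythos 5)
Your proof is correct and follows essentially the same route as the paper's: decompose the intersection into $\lambda$-weight spaces using that $\hL$ commutes with $\lambda$, then swap the quantifiers $\exists\hl$ and $\forall k$ because each condition is a nonempty Zariski-open subset of the irreducible group $\hL$. You merely spell out details the paper leaves implicit (the openness via minors, and the derivation of the dimension identity \eqref{eq:gdimcond} from the codimension hypothesis \eqref{eq:1}), which is fine.
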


\begin{proof}
Since the actions of $\lambda$ and $\hL$ commute, the pair  $(w,\hw)$ is  Levi-movable if and only if 
$$
\exists \hl\in\hL\quad\forall k\in\ZZ_{<0}\quad \iota(\Ta_w^k)\cap \hl\hTa_\hw^k=\{0\}.
$$
But 
the condition $\iota(\Ta_w^k)\cap \hl\hTa_\hw^k=\{0\}$ is open
in $\hl$. This allows to permute the ``$\exists$'' and the ``$\forall$''.
\end{proof}

Denote by $\Phi$ the set of roots of $(G,T)$ and consider the root space decomposition of 
$\lg=\oplus_{\alpha\in\Phi}\lg_\alpha\oplus\lt$.
Let $\Phi^+$ be the set of positive roots of $B$ and set $\Phi^-=-\Phi^+$.
Consider the natural pairing $\langle\ ,\ \rangle$ between 1-ps and 
characters of $T$. Observe that
$\Ta^k$ is canonically isomorphic to 
$$\bigoplus_{
    \alpha\in\Phi,\,
\langle\lambda ,\alpha \rangle=k
}\lg_\alpha.
$$
Denote by $\Phi^k$ the set of 
$\alpha\in\Phi$ such that $\langle\lambda ,\alpha \rangle=k$.
The space $\Ta_w^k$ is canonically isomorphic to 
$$\bigoplus_{
    \alpha\in\Phi(w),\,
\langle\lambda ,\alpha \rangle=-k
}
\lg_{-\alpha},
$$
where $\Phi(w)=\Phi^+\cap w^{-1}\Phi^-$. Denote by $\Phi(w)^k$ the set of 
$\alpha\in\Phi(w)$ such that $\langle\lambda ,\alpha \rangle=-k$.\\

\subsection{Description of the cone $\QQ_{\geq 0}LR(G,\hG)$}

Recall that $\Wt_T(\hlg/\lg)$ is the set of non trivial weights of $T$ in $\hlg/\lg$. 
Let $\Chi(T)\otimes_\ZZ \QQ$ denote the rational vector space  spanned by the
characters of $T$.
We consider the set of  hyperplanes $H$ of $\Chi(T)\otimes_\ZZ \QQ$ spanned by some
elements of $\Wt_T(\hlg/\lg)$.
For each such hyperplane $H$ there exist exactly two opposite indivisible 1-ps 
$\pm\lambda_H$ that are orthogonal (for the paring $\langle\cdot,\cdot\rangle$) to $H$.
The so obtained 1-ps form a stable set under the action of $W$.
Let $\{\lambda_1,\,\dots,\lambda_n\}$ be the set of dominant such 1-ps.   
Denote by $P_i$ and $\hP_i$ the parabolic subgroups of $G$ and $\hG$ 
associated to $\lambda_i$.
A 1-ps of $T$ is said to be {\it admissible} if the hyperplane
of $\Chi(T)\otimes_\ZZ \QQ$ defined by 
$\langle\lambda,\cdot\rangle=0$ is spanned by some elements of $\Wt_T(\hlg/\lg)$, 
or equivalently if $\lambda$ belongs to some $\ZZ_{>0}W\lambda_i$.

\begin{theo}\label{th:desccone}\cite{GITEigen} 
(see also \cite[Proposition 2.3]{RR})
Suppose that every connected, closed and normal subgroup of $\hG$ contained 
in $G$ is trivial.   
Then $\QQ_{\geq 0}LR(G,\hG)$ has non empty interior  in $\Chi(T\times\hT)\otimes_\ZZ \QQ$.

\begin{enumerate}[(i)]
\item 
Let $i\in\{1,\dots,n\}$ and let  $(w,\hw)\in W^{P_i}\times\hW^{\hP_i}$
be a  Levi-movable pair. Then for any $(\nu,\hnu)$ in $\QQ_{\geq 0}LR(G,\hG)$
we have
\begin{eqnarray}
  \label{eq:ineg}
\langle w\lambda_i, \nu \rangle+\langle\hw\lambda_i,\hnu\rangle  \leq 0.
\end{eqnarray}
\item 
\label{ass2}
A dominant weight $(\nu,\hnu)$ belongs to  $\QQ_{\geq 0}LR(G,\hG)$ 
if and only if 
\begin{eqnarray}
  \label{eq:ineg2}
\langle w\lambda_i, \nu \rangle+\langle\hw\lambda_i,\hnu\rangle  \leq 0.
\end{eqnarray}
for any $i=1,\dots, n$ 
and for any Levi-movable pair $(w,\hw)\in W^{P_i}\times\hW^{\hP_i}$ such that 
$\iota^*(\sigma_\hw)\cdot\sigma_w=[pt]\in {\rm H}^*(G/P(\lambda_i),\ZZ)$.
\item 
Each inequality~\eqref{eq:ineg2} in assertion~\eqref{ass2}  corresponds to a codimension one face
of the cone $\QQ_{\geq 0}LR(G,\hG)$.
\end{enumerate}
\end{theo}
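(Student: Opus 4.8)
The plan is to recast membership in $\QQ_{\geq 0}LR(G,\hG)$ as a question of geometric invariant theory and then read off the facets from the Hilbert--Mumford numerical criterion, following \cite{GITEigen} and \cite{RR}. First I would set $\mathcal X=G/B\times\hG/\hB$ and let $G$ act diagonally through the embedding $G\subset\hG$. By Borel--Weil and Lemma~\ref{lem:LRgroup2}, for a dominant pair $(\nu,\hnu)$ the space of $G$-invariant sections of the line bundle $\mathcal L_{(\nu,\hnu)}$ on $\mathcal X$ is isomorphic to $(V_G(\nu)^*\otimes V_\hG(\hnu))^G$; hence some positive multiple of $(\nu,\hnu)$ lies in $LR(G,\hG)$ if and only if the $G$-semistable locus $\mathcal X^{ss}(\mathcal L_{(\nu,\hnu)})$ is non-empty. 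Thus $\QQ_{\geq 0}LR(G,\hG)$ is exactly the cone of polarizations with non-empty semistable locus. For the claim that this cone has non-empty interior, I would observe that under the hypothesis any subtorus of $\hZ\cap G$ is a connected closed normal subgroup of $\hG$ contained in $G$, hence trivial, so $\hZ\cap G$ is finite; the constraints of Theorem~\ref{th:LRgroup} are then mere congruences imposed by a finite group and impose no linear relation over $\QQ$. Therefore $\ZZ LR(G,\hG)$ has full rank, its $\QQ$-span is all of $\Chi(T\times\hT)\otimes_\ZZ\QQ$, and the cone is full-dimensional.

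For assertion~(i) I would invoke the numerical criterion directly. Given $i$ and a pair $(w,\hw)\in W^{P_i}\times\hW^{\hP_i}$, the Schubert data determine a distinguished point of $\mathcal X$ lying in the relevant Bialynicki--Birula cell for $\lambda_i$; computing the Hilbert--Mumford invariant $\mu^{\mathcal L_{(\nu,\hnu)}}(\cdot,\lambda_i)$ at the associated $T$-fixed point yields, up to sign, precisely the linear form $\langle w\lambda_i,\nu\rangle+\langle\hw\lambda_i,\hnu\rangle$. Since $(w,\hw)$ is Levi-movable it is in particular covering, i.e. $\iota^*(\sigma_\hw)\cdot\sigma_w\neq 0$, so a general point of the associated cell meets $G\cdot\mathcal X^{ss}$ whenever the latter is non-empty; the numerical criterion then forces the correct sign, giving the inequality~\eqref{eq:ineg} on the whole cone.

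The substance lies in assertions~(ii) and~(iii), which I would treat together through Ressayre's notion of a well-covering pair. The key reduction is that it suffices to test semistability against the finite set of admissible one-parameter subgroups: if $\mathcal X^{ss}(\mathcal L_{(\nu,\hnu)})=\emptyset$, a Kempf optimal destabilizing one-parameter subgroup exists, and because the $T$-weights occurring in the tangent spaces under consideration all lie in $\Wt_T(\hlg/\lg)$, this optimal one-parameter subgroup may be conjugated into $\bigcup_i\ZZ_{>0}W\lambda_i$. Destabilization then produces a violated inequality of the form~\eqref{eq:ineg2}, so the listed inequalities do cut out the cone, proving~(ii). For~(iii), the point is that an inequality defines an honest codimension-one face exactly when the corresponding pair is well-covering in the sense of \cite{GITEigen}, that is, the map $G\times_{P_i}(\text{cell})\to\mathcal X$ is birational onto its image; translating this to the tangent level via \eqref{eq:gdimcond} and the transversality~\eqref{eq:2} defining Levi-movability, it becomes exactly the requirement that the Schubert coefficient be one, $\iota^*(\sigma_\hw)\cdot\sigma_w=[pt]$, rather than a larger multiple $c[pt]$.

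I expect the main obstacle to be the sufficiency direction~(ii): controlling the Kempf optimal one-parameter subgroup and showing it can always be taken among the walls determined by $\Wt_T(\hlg/\lg)$, together with the verification that the well-covering (degree-one) condition is precisely the Belkale--Kumar Levi-movability condition \cite{BK:satoddorthsymp} rather than a strictly weaker covering condition. Once the dictionary between well-covering pairs and Levi-movable pairs with $\iota^*(\sigma_\hw)\cdot\sigma_w=[pt]$ is established, assertions~(ii) and~(iii) follow from the general GIT description of the semistability cone.
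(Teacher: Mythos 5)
The paper does not prove this theorem: it is quoted from \cite{GITEigen} (see also \cite[Proposition 2.3]{RR}), so there is no internal argument to compare yours against. That said, your outline is an accurate reconstruction of the strategy of the cited source. The translation of $\QQ_{\geq 0}LR(G,\hG)$ into the cone of polarizations on $\mathcal X=G/B\times\hG/\hB$ with non-empty $G$-semistable locus is exactly the right starting point, and your argument for the non-empty interior is sound: the hypothesis forces the identity component of $\hZ\cap G$ (a central, hence normal, connected subgroup of $\hG$ contained in $G$) to be trivial, so $\hZ\cap G$ is finite, Theorem~\ref{th:LRgroup} then shows $\ZZ LR(G,\hG)$ has full rank, and since the linear span of the convex cone is $\QQ_{\geq 0}LR-\QQ_{\geq 0}LR=\QQ\,\ZZ LR$, the cone is full-dimensional. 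Assertion~(i) via the Hilbert--Mumford criterion applied at points of the Bialynicki--Birula cell attached to $(w,\hw)$, using $\iota^*(\sigma_\hw)\cdot\sigma_w\neq 0$ to guarantee that the cell meets the saturation of the semistable locus, is also the standard argument.

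Be aware, however, that what you have written is a strategy, not a proof: the two steps you yourself flag as ``the main obstacle'' --- (a) that a Kempf optimal destabilizing one-parameter subgroup can always be conjugated into $\bigcup_i\ZZ_{>0}W\lambda_i$, i.e.\ that only the walls spanned by elements of $\Wt_T(\hlg/\lg)$ matter, and (b) the dictionary identifying well-covering pairs with Levi-movable pairs satisfying $\iota^*(\sigma_\hw)\cdot\sigma_w=[pt]$, which is what upgrades (ii) to an irredundant list and yields (iii) --- are precisely the substantive content of \cite{GITEigen}, and neither is carried out in your sketch. In particular, for (iii) one must show not merely that each such inequality is satisfied on the cone but that equality is attained on a full codimension-one face, which requires constructing semistable points on the wall (via the well-covering property) rather than only destabilizing off it. As a blind reconstruction of the cited theorem your proposal is on target; as a self-contained proof it is incomplete exactly where you say it is.
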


\subsection{Finalization of the method}\label{sec:algo}

To decide if a given pair $(G,\hG)$ has the saturation property, we first
compute the cone $\QQ_{\geq 0}LR(G,\hG)$ following the steps below.
 
\begin{enumerate}[Step~1.]
\item Compute the weights of $T$ in $\hlg/\lg$ and the admissible 1-ps 
$\lambda_1,\dots,\lambda_n$.
\item For each $i$ and each $w\in W^{P_i}$ compute $\Phi(w)^k$ for each $k$.
Similarly compute the subsets    $\Phi(\hw)^k$.
\item List for each $i$, the set of pairs 
$(w,\hw)\in W^{P_i}\times\hW^{\hP_i}$ satisfying condition~\eqref{eq:gdimcond}.
\item For each pair $(w,\hw)$ in this list, find an $\hl$ such that the condition~\eqref{eq:2} is satisfied. It may happen that we do not find such a $\hl$, but it does not mean necessarily that it does not exist. In that case, to be sure that $(w,\hw)$ is not Levi-movable, we have to go until Step~6 and come back to this step if necessary. (Note that, since the set of $\hl$ satisfying  condition~\eqref{eq:2} is open in $\hL$, the probability to have the good result at the first time is close to 1.)

The L-movable pairs $(w,\hw)$ we found at this step, give a list of inequalities~\eqref{eq:ineg} satisfied by the points of $\QQ_{\geq 0}LR(G,\hG)$ and then define a cone ${\mathcal C}$ containing $\QQ_{\geq 0}LR(G,\hG)$.

\item Compute the rays of ${\mathcal C}$.
\item Check that each ray belongs to  $\QQ_{\geq 0}LR(G,\hG)$.
If it is true, then we deduce that $\QQ_{\geq 0}LR(G,\hG)\subset{\mathcal C}$. If one of the rays does not belong to $\QQ_{\geq 0}LR(G,\hG)$, we have to come back to Step~4 and to find an L-movable pair more.
\end{enumerate}

At this point, we can also compute the redundant inequalities, by computing the rays of the dual cone of ${\mathcal C}$. We proceed as follows with 4ti2. We take the rays of ${\mathcal C}$ as inequalities to get ${\mathcal C}^\vee$, and we compute the rays of ${\mathcal C}^\vee$, which give the minimal set of inequalities defining ${\mathcal C}$.

\bigskip
Now to decide if the pair $(G,\hG)$ has the saturation property it is 
sufficient to

\begin{enumerate}
\item Compute the Hilbert bases of the semigroup 
 $\QQ_{\geq 0}LR(G,\hG)\cap\ZZ LR(G,\hG)$.
\item Check whether or not the elements of the Hilbert bases belong to
 $LR(G,\hG)$.
\end{enumerate}

{\bf Notation:} In all examples, we take Bourbaki's notation for simple roots, simple reflections, fundamental weights, and $\epsilon_i$'s, adding a hat to data corresponding to $\hG$.

\section{A first example with details: $\SL_3$ in $G_2$}

The root system of $G_2$ is generally represented by the following picture.

\begin{center}
\begin{tikzpicture}[scale=1]
  \path (0,0) coordinate (origin);
\coordinate [label=right:$\halpha_1$] (A1) at (0:1cm);
\coordinate [label=right:$\hat{\varpi}_1$] (A2) at (1*60:1cm);
\path (2*60:1cm) coordinate (A3);
\path (3*60:1cm) coordinate (A4);
\path (4*60:1cm) coordinate (A5);
\path (5*60:1cm) coordinate (A6);

\draw [->,color=red] (origin) -- (A1);
\draw [->,color=green]  (origin) -- (A2) ;\draw [->] (origin) -- (A3) ;\draw [->] (origin) -- (A4) ;\draw [->] (origin) -- (A5) ;\draw [->] (origin) -- (A6);

\coordinate [label=left:$\hat{\varpi}_2$]  (B1) at (90:1.732) ;
\coordinate [label=left:$\halpha_2$] (B2) at (1*60+90:1.732cm);
\path (2*60+90:1.732cm) coordinate (B3);
\path (3*60+90:1.732cm) coordinate (B4);
\path (4*60+90:1.732cm) coordinate (B5);
\path (5*60+90:1.732cm) coordinate (B6);

\draw [->,color=green] (origin) -- (B1);
\draw [->,color=red]  (origin) -- (B2) ;
\draw [->] (origin) -- (B3) ;\draw [->] (origin) -- (B4) ;\draw [->] (origin) -- (B5) ;\draw [->] (origin) -- (B6);
\end{tikzpicture}
\end{center}

The set of long roots of $G_2$ gives a subsystem of roots of type $A_2$.
We follow the steps of Section~\ref{sec:algo}.

\begin{enumerate}[Step~1.]

\item The weights of $T$ on
$\hlg/\lg$ are the short roots packed in two opposite triangles that are stable by the Weyl group $W$ generated by the reflections associated to long roots.
There is exactly one indivisible dominant admissible 1-ps $\lambda$ defined by:
$$
\lambda(t)=\diag(t,1,t^{-1}).
$$

\item The variety $G/P(\lambda)$ is the complete flag variety $\Fl(\CC^3)$.
Moreover $\hP(\lambda)$ is the maximal parabolic subgroup associated
to the long simple root; and $\hG/\hP(\lambda)$ is $Q^5$. 
The weights $\Wt_T(\hTa)$ of $\hT=T$ on $\hTa$ are  the five negative roots different 
from $-\halpha_1$. The set  $\Phi(\hTa)$ is represented by
\begin{center}
 \begin{tikzpicture}[scale=.2]
   \draw (0,0) -- (4,0) -- (4,1) -- (0,1) -- cycle;
   \draw (1,0) -- (1,1) (2,0) -- (2,1) (3,0) -- (3,1);
 \draw (1.5,0) -- (2.5,0) -- (2.5,-1) -- (1.5,-1) -- cycle;
 \end{tikzpicture}
 \end{center}
 where each box corresponds to a root in a canonical way.
For any $\hw\in \hW^{\hP}$, the opposite of the elements of $\Phi(\hw)$ are
 contained in $\Phi(\hTa)$ and represented by black boxes.

The weights of $\Ta$ are
the 3  long roots in $\Phi(\hTa)$ represented by the three corresponding boxes:
\begin{center}
  \begin{tikzpicture}[scale=0.2]
      \videGB
    \end{tikzpicture}
\end{center}
The 6 inversion sets $\Phi(w)$ for $w\in W^P$ and the 6  inversion sets 
$\Phi(\hw)$ for $w\in W^P$
are represented on Figure~\ref{fig:invSL3G2}.
 
\bigskip
\begin{figure}

\begin{tabular}{c}
\begin{tikzpicture}
  \node [rectangle] (a) at (0,0) {
    \begin{tikzpicture}[scale=0.2]
      \videGB
    \end{tikzpicture}
};

  \node [rectangle] (b) at (2,-1) {\begin{tikzpicture}[scale=0.2]
\grisun\videGB
\end{tikzpicture}};

\node [rectangle] (c) at (2,1) {\begin{tikzpicture}[scale=0.2]
\grisdeux\videGB
\end{tikzpicture}};

\node [rectangle] (d) at (4,-1) {\begin{tikzpicture}[scale=0.2]
\grisun\gristrois\videGB
\end{tikzpicture}};

\node [rectangle] (e) at (4,1) {\begin{tikzpicture}[scale=0.2]
\grisdeux\gristrois\videGB
\end{tikzpicture}};

\node [rectangle] (f) at (6,0) {\begin{tikzpicture}[scale=0.2]
\grisdeux\gristrois\grisun\videGB
\end{tikzpicture}};

\draw (a) -- node[below] {$s_1$} (b);
\draw (a) -- node[above] {$s_2$} (c);
\draw (b) -- node[below] {$s_2$} (d);
\draw (c) -- node[above] {$s_1$} (e);
\draw (e) -- node[above] {$s_2$} (f);
\draw (d) -- node[below] {$s_1$} (f);
\end{tikzpicture}\\

\begin{tikzpicture}
  \node [rectangle] (a) at (0,0) {\begin{tikzpicture}[scale=0.2]\draw (0,0) -- (4,0) -- (4,1) -- (0,1) -- cycle;
  \draw (1,0) -- (1,1) (2,0) -- (2,1) (3,0) -- (3,1);
\draw (1.5,0) -- (2.5,0) -- (2.5,-1) -- (1.5,-1) -- cycle;\end{tikzpicture}};

   \node [rectangle] (b) at (2,0) {\begin{tikzpicture}[scale=0.2]
\draw  [fill=gray!60] (3,0) -- (4,0) -- (4,1) -- (3,1) -- cycle;
\draw (0,0) -- (4,0) -- (4,1) -- (0,1) -- cycle;
  \draw (1,0) -- (1,1) (2,0) -- (2,1) (3,0) -- (3,1);
\draw (1.5,0) -- (2.5,0) -- (2.5,-1) -- (1.5,-1) -- cycle;\end{tikzpicture}};

 \node [rectangle] (c) at (4,0) {\begin{tikzpicture}[scale=0.2]
\draw  [fill=gray!60] (2,0) -- (4,0) -- (4,1) -- (2,1) -- cycle;
\draw (0,0) -- (4,0) -- (4,1) -- (0,1) -- cycle;
  \draw (1,0) -- (1,1) (2,0) -- (2,1) (3,0) -- (3,1);
\draw  (1.5,0) -- (2.5,0) -- (2.5,-1) -- (1.5,-1) -- cycle;\end{tikzpicture}};

\node [rectangle] (d) at (6,0) {\begin{tikzpicture}[scale=0.2]
\draw  [fill=gray!60] (2,0) -- (4,0) -- (4,1) -- (2,1) -- cycle;
\draw (0,0) -- (4,0) -- (4,1) -- (0,1) -- cycle;
  \draw (1,0) -- (1,1) (2,0) -- (2,1) (3,0) -- (3,1);
\draw [fill=gray!60] (1.5,0) -- (2.5,0) -- (2.5,-1) -- (1.5,-1) -- cycle;\end{tikzpicture}};

 \node [rectangle] (e) at (8,0) {\begin{tikzpicture}[scale=0.2]\draw (0,0) -- (4,0) -- (4,1) -- (0,1) -- cycle;
\draw  [fill=gray!60] (1,0) -- (4,0) -- (4,1) -- (1,1) -- cycle;
  \draw (1,0) -- (1,1) (2,0) -- (2,1) (3,0) -- (3,1);
\draw [fill=gray!60](1.5,0) -- (2.5,0) -- (2.5,-1) -- (1.5,-1) -- cycle;\end{tikzpicture}};

 \node [rectangle] (f) at (10,0) {\begin{tikzpicture}[scale=0.2]\draw[fill=gray!60] (0,0) -- (4,0) -- (4,1) -- (0,1) -- cycle;
  \draw (1,0) -- (1,1) (2,0) -- (2,1) (3,0) -- (3,1);
\draw[fill=gray!60](1.5,0) -- (2.5,0) -- (2.5,-1) -- (1.5,-1) --
cycle;\end{tikzpicture}};

\draw (a) -- node[above] {$\hs_2$} (b);
\draw (b) -- node[above] {$\hs_1$} (c);
\draw (c) -- node[above] {$\hs_2$} (d);
\draw (d) -- node[above] {$\hs_1$} (e);
\draw (e) -- node[above] {$\hs_2$} (f);
\end{tikzpicture}
\end{tabular}
\caption{Inversion sets for $G/P$ and $\hG/\hP$}
\label{fig:invSL3G2}
\end{figure}
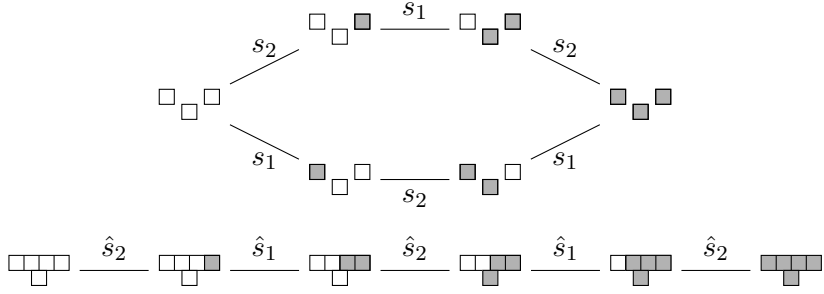

\item Only 4 pairs $(w,\,\hw)$ satisfy condition~\eqref{eq:gdimcond}:

\begin{tabular*}{1.0\linewidth}{cccc}
$\big ($ \begin{tikzpicture}[scale=0.2]
      \videGB
    \end{tikzpicture},\,
\begin{tikzpicture}[scale=0.2]\draw[fill=gray!60] (0,0) -- (4,0) -- (4,1) -- (0,1) -- cycle;
  \draw (1,0) -- (1,1) (2,0) -- (2,1) (3,0) -- (3,1);
\draw[fill=gray!60](1.5,0) -- (2.5,0) -- (2.5,-1) -- (1.5,-1) --
cycle;\end{tikzpicture}
$\big )$
&
$\big ($
\begin{tikzpicture}[scale=0.2]
\grisun\videGB
\end{tikzpicture},\,
\begin{tikzpicture}[scale=0.2]\draw (0,0) -- (4,0) -- (4,1) -- (0,1) -- cycle;
\draw  [fill=gray!60] (1,0) -- (4,0) -- (4,1) -- (1,1) -- cycle;
  \draw (1,0) -- (1,1) (2,0) -- (2,1) (3,0) -- (3,1);
\draw [fill=gray!60](1.5,0) -- (2.5,0) -- (2.5,-1) -- (1.5,-1) --
cycle;\end{tikzpicture}
$\big )$
&
$\big ($
\begin{tikzpicture}[scale=0.2]
\grisdeux\videGB
\end{tikzpicture},\,
\begin{tikzpicture}[scale=0.2]\draw (0,0) -- (4,0) -- (4,1) -- (0,1) -- cycle;
\draw  [fill=gray!60] (1,0) -- (4,0) -- (4,1) -- (1,1) -- cycle;
  \draw (1,0) -- (1,1) (2,0) -- (2,1) (3,0) -- (3,1);
\draw [fill=gray!60](1.5,0) -- (2.5,0) -- (2.5,-1) -- (1.5,-1) --
cycle;\end{tikzpicture}
$\big )$
&
$\big ($
\begin{tikzpicture}[scale=0.2]
\grisdeux\gristrois\grisun\videGB
\end{tikzpicture},\,
\begin{tikzpicture}[scale=0.2]
\draw  [fill=gray!60] (2,0) -- (4,0) -- (4,1) -- (2,1) -- cycle;
\draw (0,0) -- (4,0) -- (4,1) -- (0,1) -- cycle;
  \draw (1,0) -- (1,1) (2,0) -- (2,1) (3,0) -- (3,1);
\draw  (1.5,0) -- (2.5,0) -- (2.5,-1) -- (1.5,-1) --
cycle;\end{tikzpicture}
$\big )$
  
\end{tabular*}\\

\item
 The two first pairs are clearly L-movable (with $\hl$ equals the identity) 
 and the third pair is also L-movable (with $\hl=\hat{s}_1$). 

Consider the last pair $(w=s_1s_2s_1,\hw=\hs_1 \hs_2)$.
As a $\hL$-module, $\hTa^{-1}$ is isomorphic to the space of homogeneous 
polynomial function of degree 3 in 2 variables $x$ and $y$.
Then $\hTa_\hw^{-1}$ identify with the set of polynomial functions with $[0:1]$ 
as  double root.
There exists $\hl\in\hL$ such that  $\hl\hTa_\hw^{-1}$ identify with the set of 
polynomial functions with $[1:1]$ as  double root.
But $\Ta_w^{-1}$ identifies with the span of $x^3$ and $y^3$. 
Then $\hl\hTa_\hw^{-1}\cap \Ta_w^{-1}=\{0\}$.
Hence the pair is Levi-movable.

\bigskip
We set  $\nu=a\varpi_1+b\varpi_2$ and
$\hnu=A\hvarpi_1+B\hvarpi_2$.
The inequalities \eqref{eq:ineg} corresponding to the 4 Levi-movable 
pairs are
\begin{enumerate}
\item $B\leq a+b\leq A+2B$;
\item $\max(a,b)\leq A+B$;\\
to which we add the 4 dominancy inequalities
\item $0\leq\min(a,b,A,B)$.
\end{enumerate}

\item \label{raygen}
The extremal rays of the associated cone ${\mathcal C}$ 
are generated by the following 
pairs $(\nu,\hnu)$:
$(0,\hvarpi_1)$, $(\varpi_2,\hvarpi_1)$, $(\varpi_2,\hvarpi_2)$,
$(\varpi_1,\hvarpi_1)$, $(\varpi_1,\hvarpi_2)$ and
$(\varpi_1+\varpi_2,\hvarpi_2)$.

\item \label{CLR} The decompositions of the two fundamental representations of $G_2$ 
as $\SL_3$-module show the primitive generators of these 6 rays belong to $LR(\SL_3,G_2)$.
Then  ${\mathcal C}=\QQ_{\geq 0}LR(\SL_3,G_2)$.
\end{enumerate}

 Since $\hG$ has a trivial center, $\ZZ LR(\SL_3,G_2)$ is $X(T\times\hT)$.
Using 4ti2, we compute the Hilbert basis of 
$\ZZ LR(\SL_3,G_2)\cap \QQ_{\geq 0}LR(\SL_3,G_2)$.
It coincides with the list given at Step~\ref{raygen}.
Then Step~\ref{CLR} shows that $(\SL_3,G_2)$ has the saturation property.

\section{A second example with details: $G_2$ in $\Spin_7$}

The group $G=G_2$ has a simple representation of dimension 7 which induces
an embedding of $G_2$ in $\SO_7$. Since $G_2$ is simply connected,
$G_2$ is also embedded in $\hG=\Spin_7$. 

\begin{enumerate}[Step~1.]
\item 
As a $G_2$-module $\so_7={\rm Lie}(\Spin_7)$ is isomorphic to 
$\lg_2\oplus V_G(\varpi_1)$. The non-zero weights of $V_G(\varpi_1)$ are the 6 short roots of $G_2$, then there is a unique indivisible dominant admissible 
1-ps $\lambda$ defined by $\langle\lambda,\alpha_1\rangle=0$ and $\langle\lambda,\alpha_2\rangle=1$.
Set $P=P(\lambda)$ and $\hP=\hP(\lambda)$.

\item

The homogeneous space $G/P$ is the quadric $Q^5$. The inversion sets $\Phi(w)$ 
for $w\in W^P$ are already represented in Figure~\ref{fig:invSL3G2}.

Let $\rho\,:\, X(\hT)\longto X(T)$ denote the restriction map.
It satisfies $\rho(\halpha_1)=\rho(\halpha_3)=\alpha_1$ and 
$\rho(\halpha_2)=\alpha_2$.
This allows to compute $\langle\lambda,\halpha_i\rangle$ for $i=1,2$
and $3$. We deduce that in the dual basis of $(\hat{\varepsilon}_i)_{i=1,2,3}$,
$\lambda=(1,1,0)$ (as a 1-ps in $\hT$).
In particular $\hG/\hP=Gr_Q(2,7)$ and the inversion sets for $\hG/\hP$ are represented by the following diagrams, where boxes correspond from top to bottom and left to right to the weights $\hat{\varepsilon}_1-\hat{\varepsilon}_3,\,\hat{\varepsilon}_2-\hat{\varepsilon}_3,\,\hat{\varepsilon}_1,\,\hat{\varepsilon}_2,\,\hat{\varepsilon}_1+\hat{\varepsilon}_3,\,\hat{\varepsilon}_2+\hat{\varepsilon}_3$ and $\hat{\varepsilon}_1+\hat{\varepsilon}_2$. We describe the elements of $\hW$ by the permutation acting on a basis of $V_\hG(\hvarpi_1)$ consisting of $\hat U$-stable vectors on which $\hT$ acts with weights (in this order) $\hat{\varepsilon}_1,\,\hat{\varepsilon}_2,\,\hat{\varepsilon}_3,\,0,\,-\hat{\varepsilon}_3,\,-\hat{\varepsilon}_2$ and $-\hat{\varepsilon}_1$.

$$
\begin{array}{cccccc}
  \iundeux&\iuntrois&\iuncinq&\iunsix&\ideuxtrois&\ideuxcinq\\
1234567&1324567&1524637&1634527&2314756&2514736\\[1em]
\ideuxsept & \itroissix & \itroissept & \icinqsix & \icinqsept & \isixsept\\
2734516    & 3614725  & 3724615    & 5614723 & 5724613  & 6734512
\end{array}
$$

\item Only 8 pairs $(w,\hw)$ satisfy condition~\eqref{eq:gdimcond}. We give them in the table bellow, with the data that give the corresponding inequalities. Set $\nu=a\hvarpi_1+b\hvarpi_2$ and
$\hat \nu=A\hat \varepsilon_1+B\hat \varepsilon_1+C\hat \varepsilon_1$.

$$
\begin{array}{|c|c|c|c|c|c|}
  \hline
w&\Phi(w)&\langle w\lambda,\nu\rangle&
\hat w&\Phi(\hat w)&\langle \hat w\lambda,\hat \nu\rangle\\
\hline
e&\begin{tikzpicture}[scale=0.2]\draw (0,0) -- (4,0) -- (4,1) -- (0,1) -- cycle;
  \draw (1,0) -- (1,1) (2,0) -- (2,1) (3,0) -- (3,1);
\draw (1.5,0) -- (2.5,0) -- (2.5,-1) -- (1.5,-1) --
cycle;\end{tikzpicture}
&2b+a&6734512&\isixsept&-A-B\\
\hline
s_\beta&
\begin{tikzpicture}[scale=0.2]
\draw  [fill=gray!60] (3,0) -- (4,0) -- (4,1) -- (3,1) -- cycle;
\draw (0,0) -- (4,0) -- (4,1) -- (0,1) -- cycle;
  \draw (1,0) -- (1,1) (2,0) -- (2,1) (3,0) -- (3,1);
\draw (1.5,0) -- (2.5,0) -- (2.5,-1) -- (1.5,-1) --
cycle;\end{tikzpicture}&a+b&
5724613 &\icinqsept&-A-C\\
\hline
\multirow{2}{*}{$s_\alpha s_\beta$}&
\multirow{2}{*}{\begin{tikzpicture}[scale=0.2]
\draw  [fill=gray!60] (2,0) -- (4,0) -- (4,1) -- (2,1) -- cycle;
\draw (0,0) -- (4,0) -- (4,1) -- (0,1) -- cycle;
  \draw (1,0) -- (1,1) (2,0) -- (2,1) (3,0) -- (3,1);
\draw  (1.5,0) -- (2.5,0) -- (2.5,-1) -- (1.5,-1) -- cycle;\end{tikzpicture}}
&\multirow{2}{*}{b}&3724615&\itroissept&-A+C\\
\cline{4-6}
&&&5614723&\icinqsix&-B-C\\
\hline
\multirow{2}{*}{$s_\alpha s_\beta s_\alpha s_\beta$}&
\multirow{2}{*}{\begin{tikzpicture}[scale=0.2]
\draw (0,0) -- (4,0) -- (4,1) -- (0,1) -- cycle;
\draw  [fill=gray!60] (1,0) -- (4,0) -- (4,1) -- (1,1) -- cycle;
  \draw (1,0) -- (1,1) (2,0) -- (2,1) (3,0) -- (3,1);
\draw [fill=gray!60](1.5,0) -- (2.5,0) -- (2.5,-1) -- (1.5,-1) -- cycle;
\end{tikzpicture}}
&\multirow{2}{*}{-a-b}&1634527&\iunsix&A-B\\
\cline{4-6}
&&&2514736&\ideuxcinq&B-C\\
\hline

\multirow{2}{*}{$s_\beta s_\alpha s_\beta s_\alpha s_\beta$}&
\multirow{2}{*}{\begin{tikzpicture}[scale=0.2]\draw[fill=gray!60] (0,0) -- (4,0) -- (4,1) -- (0,1) -- cycle;
  \draw (1,0) -- (1,1) (2,0) -- (2,1) (3,0) -- (3,1);
\draw[fill=gray!60](1.5,0) -- (2.5,0) -- (2.5,-1) -- (1.5,-1) -- cycle;\end{tikzpicture}}
&\multirow{2}{*}{-a-2b}&1524637&\iuncinq&A-C\\
\cline{4-6}
&&&2314756&\ideuxtrois&B+C\\
\hline
\end{array}
$$

\item The semi-simple part of the Levi subgroup $\hL$ is isomorphic to $\SL(2)\times\SL(2)$. With, for example, $$
\hl=\left(
\left( 
\begin{array}{cc}
  1&3\\
1&4
\end{array}
\right ), 
\left( 
\begin{array}{cc}
  2&1\\
3&2
\end{array}
\right )\right),
$$

we obtain that the 7 first pairs $(w,\hw)$ in the table are L-movable.

\item The inequalities \eqref{eq:ineg} corresponding to the 7 Levi-movable 
pairs are
\begin{enumerate}
\item $a\geq 0$, $b\geq 0$;
\item $A\geq B\geq C\geq 0$;
\item $A-C\leq 2b+a\leq A+B$;
\item $\max(B-C,\,A-B)\leq a+b\leq A+C$;
\item $b\leq \min(B+C,\,A-C)$;

to which we add the 5 dominancy inequalities
\item $a,b\geq 0$;
\item $A\geq B\geq C\geq 0$.
\end{enumerate}

\item The 7 extremal rays of the associated cone $\mathcal{C}$ are generated by the following pairs $(\nu,\hnu)$: $(\varpi_1,\hvarpi_1)$, $(\varpi_1,\hvarpi_2)$, $(\varpi_2,\hvarpi_2)$, $(0,\hvarpi_3)$, $(\varpi_1,\hvarpi_3)$, $(\varpi_2,\hvarpi_1+\hvarpi_3)$ and $(\varpi_2,\hvarpi_1+\hvarpi_2)$.

\item We can check that all these 7 pairs $(\nu,\hnu)$ are in $LR(G_2,\Spin_7)$ and then $\mathcal{C}=LR(G_2,\Spin_7)$.
We could also remark that the inequality corresponding to the last pair of the table is not satisfied (because $(\varpi_1,\hvarpi_1+2\hvarpi_3)=(\varpi_1,2\hat{\varepsilon_1}+\hat{\varepsilon}_2+\hat{\varepsilon}_3)\in LR(G_2,\Spin_7)$), so that the last pair of the table is not $L$-movable.
\end{enumerate}

Since $G$ has a trivial center, $\ZZ LR(G_2,\Spin_7)$ is $X(T\times\hT)$.
Using 4ti2, we compute the Hilbert basis of 
$\ZZ LR(G_2,\Spin_7)\cap \QQ_{\geq 0}LR(G_2,\Spin_7)$.
It coincides with the list given at Step~\ref{raygen}.
Then Step~\ref{CLR} shows that $(G_2,\Spin_7)$ has the saturation property.

\begin{rem}
  Let $\hT_{\SO}$ be the maximal torus of $\SO_7$. 
Then $LR(G_2,\SO_7)=X(T\times \hT_{\SO})\cap LR(G_2,\Spin_7)$.
In particular $(G_2,\SO_7)$ has the saturation property.
Observe that the Hilbert basis of $LR(G_2,\SO_7)$ is the union of the 7 
primitive generators of the extremal rays and the following 3 pairs: 
$(\varpi_1,2\hvarpi_3)$, $(\varpi_2,\hvarpi_1+2\hvarpi_3)$ and $(\varpi_1+\varpi_2,\hvarpi_1+2\hvarpi_3)$.
\end{rem}

>From the remaining examples of this paper, we use computations with Sage in order to get the Levi-movable pairs, 4ti2 to compute the Hilbert basis and Sage to check the saturation. All the programs used to obtain the results below are available in authors' web pages.

\section{$B_4$ in $F_4$}

A more detailed version of this section (using only few computations with computer) can be found in authors' web pages. 

The root system $\hat{\Phi}$ of $F_4$ contains 24 short roots
$$
\pm\hvarepsilon_i
\quad \frac 1 2 (\pm \hvarepsilon_1 \pm \hvarepsilon_2 \pm \hvarepsilon_3 \pm \hvarepsilon_4)
$$
and 24 long roots
$$
\pm \hvarepsilon_i \pm \hvarepsilon_j\quad i<j.
$$

There are 3 ways to embed $\Spin_9$ in $F_4$, they are all equivalent up to the action of $\hW$. 
We choose the one where $\Phi$ consists of the long roots of $\hat \Phi$ and the 8 short roots $\pm\hvarepsilon_i$ with $i=1,\,2,\,3$ and $4$. Note that $\varepsilon_i=\hat \varepsilon_i$.
Then, the simple roots of $B_4$ are 
$$
\alpha_1=2\halpha_4+\halpha_2+2\halpha_3,
\quad
\alpha_2=\halpha_1,
\quad
\alpha_3=\halpha_2,
\quad
\alpha_4=\halpha_3,
$$

A 1-ps
$\lambda=a\varepsilon_1^*+b\varepsilon_2^*+c\varepsilon_3^*+d\varepsilon_4^*$
is dominant if 
$
a\geq b\geq c\geq d\geq 0.
$
The weights of $T=\hT$ in $\hlg/\lg$ are
$
 \frac 1 2 (\pm \hvarepsilon_1 \pm \hvarepsilon_2 \pm \hvarepsilon_3 \pm
 \hvarepsilon_4).
$
The Weyl group $W$ of $B_4$ is $S_4.(\ZZ/2\ZZ)^4$, acting on the weights above in a natural way.
We deduce that there are two dominant indivisible admissible 1-ps:
$$
\lambda_1=\varepsilon_1^*+\varepsilon_2^*+\varepsilon_3^*+
\varepsilon_4^*\quad\mbox{ and }\quad
\lambda_2=\varepsilon_1^*+\varepsilon_2^*.
$$

To check the L-movability of the pairs, we need to know the following facts.
\begin{enumerate}
\item For $\lambda_1$, the Levi subgroup  $\hL$ is of type $B_3$ and the two tangent spaces $\hTa^{-1}$ and $\hTa^{-2}$ are isomorphic  to the spinorial representation and the standard representation as a $\Spin_7$-module.
\item For $\lambda_2$, the Levi subgroup  $\hL$ is of type $C_3$ and the two tangent spaces $\hTa^{-1}$ and $\hTa^{-2}$ are isomorphic to the third fundamental representation (subrepresentation of $\bigwedge^3\CC^6$) and the trivial representation as a  $\Spin_7$-module.
\end{enumerate}

Then, the Sage programs (and also 4ti2 to compute the rays and the Hilbert basis as in the previous sections) give the following result.

They are 36 (6 for $\lambda_1$ and 30 for $\lambda_2$) pairs satisfying condition~\eqref{eq:gdimcond} that give 28 Levi-movable pairs.
The cone $\QQ_{\geq 0}LR(\Spin(9),F_4)$ is defined by 36 non-redundant inequalities (including the 8 dominancy inequalities), it has 20 rays whose primitive elements give the Hilbert basis of the cone. In the bases of fundamental weights, these elements are:
$$\begin{array}{cccccccc@{\qquad}|@{\qquad }cccccccc} 
 0 & 0 & 0 & 0 & 0 & 0 & 0 & 1 & 0 & 1 & 0 & 0 & 1 & 0 & 0 & 0\\
 0 & 0 & 0 & 1 & 0 & 0 & 0 & 1 & 0 & 1 & 0 & 1 & 0 & 1 & 0 & 0 \\
 0 & 0 & 0 & 1 & 0 & 0 & 1 & 0 & 0 & 1 & 1 & 0 & 0 & 1 & 0 & 1 \\
 0 & 0 & 0 & 1 & 1 & 0 & 0 & 0 & 1 & 0 & 0 & 0 & 0 & 0 & 0 & 1 \\
 0 & 0 & 1 & 0 & 0 & 0 & 1 & 0 & 1 & 0 & 0 & 0 & 0 & 0 & 1 & 0 \\
 0 & 0 & 1 & 0 & 0 & 1 & 0 & 0 & 1 & 0 & 0 & 1 & 0 & 0 & 1 & 0 \\

 0 & 0 & 1 & 0 & 1 & 0 & 0 & 1 & 1 & 0 & 0 & 1 & 0 & 1 & 0 & 0 \\

 0 & 0 & 1 & 0 & 1 & 0 & 1 & 0 & 1 & 0 & 1 & 0 & 0 & 1 & 0 & 0 \\

 0 & 1 & 0 & 0 & 0 & 0 & 1 & 0 & 1 & 0 & 1 & 0 & 1 & 0 & 1 & 0\\

 0 & 1 & 0 & 0 & 0 & 1 & 0 & 0 & 1 & 0 & 1 & 0 & 1 & 1 & 0 & 0\\
\end{array}$$

We check easily, using Sage, that the pair $(B_4,F_4)$ has the saturation property.

\section{$F_4$ in $E_6$}

\noindent{\bf admissible 1-ps.}
The group $E_6$ has dimension $78$ and $F_4$ has dimension $52$. Hence
$\hlg/\lg$ has dimension $26$ and then it is the smallest representation $V_{F_4}(\varpi_4)$ of $F_4$.
But $\varpi_4=\varepsilon_1$ is a short root.
Hence $Wt_T(V_{\varpi_4})$ is the set 
of  24 short roots of $F_4$.
The hyperplanes spanned by short roots are the Levi subgroups
containing $T$ of semisimple rank 3 in $D_4$. Up to the Weyl group $W(D_4)$ of $D_4$, they
correspond bijectively with the simple roots of $D_4$. Then, up to $W$, there
are two dominant indivisible admissible 1-ps:

$$
\lambda_1=\varepsilon_1^*\quad\mbox{ and }\quad \lambda_2=\varepsilon_1^*+\varepsilon_2^*.
$$

To check the L-movability of the pairs, we need to know the following facts:
\begin{enumerate}
\item For $\lambda_1$, the Levi subgroup $\hL$ is of type $D_4$ and the two tangent spaces $\hTa^{-1}$ and $\hTa^{-2}$ are isomorphic to the direct sum of the two spinorial representations and the standard representation as a  $\Spin_8$-module.
\item For $\lambda_2$, the Levi subgroup  $\hL$ is of type $A_5$ and the two tangent spaces $\hTa^{-1}$ and $\hTa^{-2}$  are isomorphic  to the third fundamental representation $\bigwedge^3\CC^6$ and the trivial representation as a  $\SL_6$-module.
\end{enumerate}

Then, the Sage programs (and also 4ti2) give the following result.

The cone $\QQ_{\geq 0}LR(F_4,E_6)$ is defined by 61 non-redundant inequalities (including 10 dominancy inequalities), it has 37 rays whose primitive elements give the Hilbert basis of the cone. In the fundamental bases, these elements are:
$$\begin{array}{ccccccccccc|ccccccccccc}
0 & 0 & 0 & 0 & 0 & 0 & 0 & 0 & 0 & 1 &&
 0 & 0 & 0 & 0 & 1 & 0 & 0 & 0 & 0 & 0 \\
 0 & 0 & 0 & 1 & 0 & 0 & 0 & 0 & 0 & 1 &&
 0 & 0 & 0 & 1 & 0 & 0 & 0 & 0 & 1 & 0 \\
 0 & 0 & 0 & 1 & 0 & 0 & 1 & 0 & 0 & 0 &&
 0 & 0 & 0 & 1 & 0 & 1 & 0 & 0 & 0 & 0 \\
 0 & 0 & 0 & 1 & 1 & 0 & 0 & 0 & 0 & 0  && 
 0 & 0 & 1 & 0 & 0 & 0 & 0 & 0 & 1 & 0 \\
 0 & 0 & 1 & 0 & 0 & 0 & 0 & 1 & 0 & 0  & &
 0 & 0 & 1 & 0 & 0 & 0 & 1 & 0 & 0 & 0 \\
 0 & 0 & 1 & 0 & 0 & 0 & 1 & 0 & 1 & 0  & &
 0 & 0 & 1 & 0 & 0 & 1 & 0 & 0 & 0 & 1 \\
 0 & 0 & 1 & 0 & 0 & 1 & 0 & 0 & 1 & 0  & &
 0 & 0 & 1 & 0 & 0 & 1 & 1 & 0 & 0 & 0 \\
 0 & 0 & 1 & 0 & 1 & 0 & 0 & 0 & 0 & 1  & &
 0 & 0 & 1 & 0 & 1 & 1 & 0 & 0 & 0 & 0 \\
 0 & 1 & 0 & 0 & 0 & 0 & 0 & 1 & 0 & 0  & &
 0 & 1 & 0 & 0 & 0 & 0 & 0 & 1 & 1 & 0 &*\\
 0 & 1 & 0 & 0 & 0 & 0 & 1 & 0 & 0 & 1  & &
 0 & 1 & 0 & 0 & 0 & 0 & 1 & 0 & 1 & 0 &*\\
 0 & 1 & 0 & 0 & 0 & 0 & 1 & 1 & 0 & 0  & *&
 0 & 1 & 0 & 0 & 0 & 1 & 0 & 0 & 1 & 0 &*\\
 0 & 1 & 0 & 0 & 0 & 1 & 0 & 1 & 0 & 0  & *&
 0 & 1 & 0 & 0 & 0 & 1 & 1 & 0 & 0 & 0 &*\\
 0 & 1 & 0 & 0 & 0 & 1 & 1 & 0 & 1 & 0  & &
 0 & 1 & 0 & 0 & 1 & 0 & 0 & 0 & 1 & 0 \\
 0 & 1 & 0 & 0 & 1 & 1 & 0 & 0 & 0 & 1  & &
 0 & 1 & 0 & 1 & 0 & 0 & 0 & 2 & 0 & 0 &*\\
 0 & 1 & 0 & 1 & 0 & 0 & 1 & 0 & 1 & 0  & &
 1 & 0 & 0 & 0 & 0 & 0 & 0 & 0 & 1 & 0 \\
 1 & 0 & 0 & 0 & 0 & 0 & 0 & 1 & 0 & 0  & &
 1 & 0 & 0 & 0 & 0 & 0 & 1 & 0 & 0 & 0 \\
 1 & 0 & 0 & 0 & 0 & 1 & 0 & 0 & 0 & 0  & &
 1 & 0 & 0 & 1 & 0 & 0 & 0 & 1 & 0 & 0 \\
 1 & 0 & 1 & 0 & 0 & 0 & 0 & 1 & 0 & 1  & &
 1 & 0 & 1 & 0 & 1 & 0 & 0 & 1 & 0 & 0 \\
 1 & 1 & 0 & 0 & 1 & 0 & 0 & 1 & 0 & 1&
\end{array}$$

Among these 37 elements, 30 are given by the PRV Theorem (see \cite{MPR}). Moreover, the remaining 7 elements (with * in the list above) can be reduced to 5, by using the involution of $E_6$.
We now check these 5 elements, using Sage, to get the saturation property (see authors' web pages to get details).

\section{A family of examples: $\Sp_{2n}$ in $\SL_{2n}$}

Until $n=5$, Sage programs  (available in authors' web pages) and 4ti2 allow to prove the saturation property of the pair $(\Sp_{2n},\SL_{2n})$. 
In this section, we give the steps of Section~\ref{sec:algo} that we can do for any $n\geq 2$. And we give the results of computations for $n=2,\,3,\,4$ and $5$.

\subsection{Notation on the groups}

Let $V$ be a $2n$-dimensional vector space 
with basis $\base=(e_1,\dots,e_{2n})$.
Consider the bilinear symplectic form $\omega_n$ on 
$V$ with matrix 
\begin{eqnarray}
  \label{eq:defJn}
  \omega_n=\left(
  \begin{array}{cc}
   0 &J_n\\
-J_n&0
  \end{array}
\right), \quad{\rm where}\quad
J_n=\left(
  \begin{array}{c@{}c@{}c}
    &&1\\[-2pt]
&\revddots\\[-4pt]
1
  \end{array}
\right).
\end{eqnarray}

Let $G$ be the associated symplectic group.
Set $T=\{\diag(t_1,\dots,t_n,t_n^{-1},\dots,t_1^{-1})\,:\,t_i\in\CC^*\}$.
Let $B$ be the Borel subgroup of $G$ consisting of upper triangular matrices of $G$.

Here $\hG=\SL(V)$, $\hB$ is the subset of upper triangular matrices and $\hT$ is the subset of diagonal matrices.

For $i\in [1,n]$, let $\varepsilon_i$ denote the character of $T$ that maps 
$\diag(t_1,\dots,t_n,t_n^{-1},\dots,t_1^{-1})$ to $t_i$; then 
$\Chi(T)=\oplus_i\ZZ\varepsilon_i$.
Moreover $\sum\nu_i\varepsilon_i$ is dominant if and only if
$\nu_1\geq\cdots\geq\nu_n\geq 0$.
For $i\in [1;2n]$, set $\overline{i}=2n+1-i$.
The Weyl group $W$ of $G$ is a subgroup of the Weyl group $S_{2n}$ of
$\SL(V)$. More precisely
$$
W=\{w\in S_{2n}\,:\,w(\overline{i})=\overline{w(i)} \ \ \forall i\in [1;2n]\}.
$$
It is isomorphic to $S_n\ltimes(\ZZ/2\ZZ)^n$.
The group $Y(T)$ of 1-ps of $T$ identifies with $\ZZ^n$ by
$(a_1,\dots,a_n)\longmapsto(t\mapsto
\diag(t^{a_1},\dots,t^{a_n},t^{-a_n},\dots,t^{-a_1}))$.
The group $W$ acts on $Y(T)$ by permuting coordinates and changing the
signs of the coordinates. The dominant 1-ps are those satisfying 
$a_1\geq\cdots \geq a_n\geq 0$.

\subsection{Step~1: weights of $T$ in $\hlg/\lg$ and 1-ps}

The quotient $\hlg/\lg$ is isomorphic to $\bigwedge^2V^*/\CC\omega$ as a
$G=\Sp(V)$-module. Then, the set of weights of $T$ in $\hlg/\lg$ is
$$
 {\rm Wt}_T(\hlg/\lg)=\pm\{\varepsilon_i\pm\varepsilon_j\,:\,1\leq i<j\leq n\}.
$$

\begin{lem}
  \label{lem:adopsSpSl}
Let $n\geq 2$.
The dominant indivisible admissible 1-ps of $T$ for the pair
$(\Sp_{2n},\SL_{2n})$ are the following $n-1$ points of
$\ZZ^n$:
$$
\lambda_1=(1,0,\dots,0),\,\lambda_2=(1,1,0,\dots,0),\cdots,\lambda_{n-2}=(1,\dots,1,0,0)\mbox{ and
}\lambda_n=(1,\dots,1).
$$
\end{lem}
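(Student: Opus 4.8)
The plan is to apply directly the recipe for admissible $1$-ps recalled in the paragraph preceding Theorem~\ref{th:desccone}: a $1$-ps $\lambda$ is admissible exactly when the elements of $\Wt_T(\hlg/\lg)$ lying in the hyperplane $\lambda^\perp=\{\langle\lambda,\cdot\rangle=0\}$ span $\lambda^\perp$. As every such weight already lies in $\lambda^\perp$ and $\dim\lambda^\perp=n-1$, this is equivalent to requiring that the weights orthogonal to $\lambda$ have rank exactly $n-1$. Thus the statement reduces to a rank count performed over dominant representatives $\lambda=(a_1,\dots,a_n)$ with $a_1\ge\cdots\ge a_n\ge 0$, indivisibility then forcing the common nonzero value to be $1$.

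First I would record which weights pair to zero with such a $\lambda$. From $\Wt_T(\hlg/\lg)=\pm\{\varepsilon_i\pm\varepsilon_j:1\le i<j\le n\}$ and $\langle\lambda,\varepsilon_i\pm\varepsilon_j\rangle=a_i\pm a_j$, together with the crucial sign condition $a_i\ge 0$, one finds $\varepsilon_i-\varepsilon_j\perp\lambda$ iff $a_i=a_j$, whereas $\varepsilon_i+\varepsilon_j\perp\lambda$ iff $a_i=a_j=0$. Next I would cut $\{1,\dots,n\}$ into the maximal blocks of indices on which $\lambda$ is constant, say $p$ blocks, the last of which may be a ``zero block'' of size $z$. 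The orthogonal differences from a block of size $c$ span the sum-zero subspace on that block, of dimension $c-1$; since the blocks use disjoint coordinates these ranks simply add. The orthogonal sums $\varepsilon_i+\varepsilon_j$ can occur only inside the zero block, and there, combined with the differences, they fill its entire coordinate subspace (as $(\varepsilon_i+\varepsilon_j)\pm(\varepsilon_i-\varepsilon_j)$ yields $2\varepsilon_i$ and $2\varepsilon_j$) --- but only when $z\ge 2$.

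Summing the contributions gives total rank $n-p$ when there is no zero block of size $\ge 2$, and $n-p+1$ when the zero block has size $z\ge 2$. Imposing rank $=n-1$ then isolates exactly two families: $p=1$ with a single positive value, yielding $\lambda_n=(1,\dots,1)$; and $p=2$ with one positive block of size $k$ and a zero block of size $n-k\ge 2$, yielding $\lambda_k=(1,\dots,1,0,\dots,0)$ for $1\le k\le n-2$. This is precisely the asserted list of $n-1$ points. I expect the one genuinely delicate point --- and the reason $\lambda_{n-1}=(1,\dots,1,0)$ is \emph{absent} from the list --- to be this asymmetric accounting for the zero block: a single trailing zero supplies no weight $\varepsilon_i+\varepsilon_j$, so it behaves like an ordinary block and fails to contribute the one extra dimension, while a zero block of size $\ge 2$ does. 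Keeping that distinction straight is the heart of the argument.
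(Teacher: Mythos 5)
Your argument is correct and is essentially the paper's: both reduce admissibility to a dimension count over the partition of $\{1,\dots,n\}$ into level sets of the $|a_i|$, with the decisive observation that the zero part contributes one extra dimension exactly when it has at least two elements (which is why $\lambda_{n-1}$ is excluded). The only cosmetic difference is that you compute the rank of the span of the orthogonal weights on a dominant representative, whereas the paper dually computes the dimension of the solution space of the corresponding system of equations, encoded as a labelled graph and normalized by the $W$-action.
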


\begin{proof}
We easily check that each $\lambda_i$ in the statement is admissible. 
  Let $\lambda=(a_1,\dots,a_n)$ be a generic 1-ps. 
The equations $\langle\lambda,\alpha\rangle=0$ for some $\alpha\in
{\rm Wt}_T(\hlg/\lg)$ are $a_i=\pm a_j$ for some $i<j$.
We represent this  equation by a graph with two vertices indexed by $i$ 
and $j$ and one edge labelled by $\pm$. 
Consider a system of such equations which defines a line in
$\Chi(T)\otimes\QQ$.
We represent this system by a graph $\Gamma$ 
with vertices $i=1,\dots,n$
and edges labelled by $\pm$.

Each connected component of $\Gamma$ gives a subsystem in some variables $a_i$. By assumption, exactly one connected component 
$\Gamma_0$ gives a system with a line as solution and the other components 
have only the trivial solution.

Consider a connected subtree that contains any vertex of $\Gamma_0$. Up to $W$ we
may assume that the labels are $+$ for this subtree. 
The system associated to 
$\Gamma_0$ implies
$a_i=a_j$ for all vertices $i$ and $j$ of $\Gamma_0$. 
Since this system has solutions by assumption, it is spanned by the line $a_i=1$ for any $i$ in $\Gamma_0$.

The others connected components of the graph $\Gamma$ implies that $a_i=0$ if $i\not\in\Gamma_0$. 
Observe that these connected components encode at least two equations and have at least two vertices. 
The lemma is proved.
\end{proof}

\subsection{Step~2 : inversion sets}\label{sec:inclusionSpSl}


Let $r\in\{1,\dots, n-2\}$.
The inclusion of $G/P(\lambda_i)$ in $\hG/\hP(\lambda_i)$ is given by the following map 
$$
\begin{array}{cccc}
  \iota_r\,:&\Gr_\omega(r,2n)&\longto &\Fl(r,2n-r;2n)\\
&F&\longmapsto&(F,F^{\perp_{\omega_n}}).
\end{array}
$$
Set $F={\rm Span}(e_1,\dots,e_r)$, $G={\rm Span}(e_{r+1},\dots,e_{2n-r})$ and $\bar F={\rm Span}(e_{2n-r+1},\dots,e_{2n})$.
Then $V=F\oplus G\oplus\bar F$ is a $\hT$-stable decomposition, 
$\hL_r=S(\GL(F)\times \GL(G)\times \GL(\bar F))$ and the tangent 
space $T_{(F,F\oplus G)}\Fl(r,2n-r;2n)$ identifies with
$\hTa_r=\Hom(F,G)\oplus \Hom(F,\bar F) \oplus \Hom(G,\bar F)$.
Moreover $F^{\perp_{\omega_n}}=F\oplus G$, and $\omega_n$ identifies $\overline{F}$ 
with the dual of $F$.
The tangent space $T_{F}\Gr_\omega(r,2n)$ identifies with $\Ta_r={\rm Hom}(F,G)\oplus S^2F^*$.
The natural action of $L_r$ which is isomorphic to $\GL(F)\times\Sp(G)$ makes this
identification equivariant.

Using $\iota_r$, $\Ta_r$ identifies with the fixed point set in $\hTa_r$
of the involution: $(a,b,c)\longmapsto (^tc,^tb,^ta)$.
The weight spaces of  $\lambda_r$ in $\Ta_r$ and $\hTa_r$ are
$\Ta_r^{-1}={\rm Hom}(F,G)$, $\Ta_r^{-2}=S^2F^*$, 
$\hTa_r^{-1}=\Hom(F,G)\oplus \Hom(G,\bar F)$ and
$\hTa_r^{-2}= \Hom(F,\bar F) $.
In terms of matrices (with canonical bases), the inclusion of 
$\Ta_r^{-1}\subset\hTa_r^{-1}$  can be written as follows
$$
\begin{array}{ccc}
  {\rm Hom}(F,G)&\longto&\Hom(F,G)\oplus \Hom(G,\bar F)\\
A&\longmapsto&(A,J_r.{}^tA.\omega_{n-r}).
\end{array}
$$
 The inclusion of 
$\Ta_r^{-2}\subset\hTa_r^{-2}$  can be written as follows

\begin{eqnarray}
\label{iota2Ta}
\begin{array}{ccc}
  S^2F^*&\longto&\Hom(F,\bar F)\\
A&\longmapsto&J_r.A.
\end{array}
\end{eqnarray}

For $\lambda_n$ we find
$$
\begin{array}{cccc}
  \iota_n\,:&Gr_\omega(n,2n)&\longto &Gr(n,2n)\\
&F&\longmapsto&F.
\end{array}
$$
For $F={\rm Span}(e_1,\dots,e_n)$, 
$\Ta_n=\Ta_n^{-2}=S^2F^*$ is embedded in 
$\hTa_n=\hTa_n^{-2}=\Hom(F,\bar F)$ by formula \eqref{iota2Ta}. 

\bigskip
We draw $\Phi_r^{-1}$ and $\hat\Phi_r^{-1}$ as follows
\begin{center}
 \begin{tikzpicture}[scale=0.4]
    \draw (0,1) rectangle (3,5) (1,1) -- (1,5) (2,1) -- (2,5);
\foreach \y in {2,...,4} {\draw (0,\y) -- (3,\y);};
\draw (0.5,5) node[above]{$1$} (1.5,5) node[above]{$\dots$}  (2.5,5) node[above]{$r$};
 \draw (0,4.5) node[left]{$r+1$} (0,3) node[left]{$\vdots$}  (0,1.5) node[left]{$2n-r$};
  \end{tikzpicture}
\hspace{2cm}
\begin{tikzpicture}[scale=0.4]
    \draw (-1,1) rectangle (2,5) (1,1) -- (1,5)  (0,1) -- (0,5);
\foreach \y in {2,...,4} {\draw (-1,\y) -- (2,\y);};

+\draw (2,1) rectangle (6,-2) (2,0) -- (6,0) (2,-1) -- (6,-1) ;
\foreach \x in {3,...,5} {\draw (\x,-2) -- (\x,1);};
\draw (-0.5,5) node[above]{$1$} (0.5,5) node[above]{$\dots$}  (1.5,5) node[above]{$r$};
\draw (2.5,-2) node[below]{$r+1$} 
(5.5,1) node[above]{$2n-r$};
\draw (-1,4.5) node[left]{$r+1$} (-1,3) node[left]{$\vdots$}  (-1,1.5) node[left]{$2n-r$};
\draw (2,0.5) node[left]{$2n-r+1$} (2,-0.5) node[left]{$\vdots$}  (2,-1.5) node[left]{$2n$};
  \end{tikzpicture}
\end{center}
where the box at line $i$ and column $j$ represents respectively the root
$\varepsilon_i-\varepsilon_j$ and 
$\hat\varepsilon_i-\hat\varepsilon_j$.
We draw $\Phi_r^{-2}$ and $\hat\Phi_r^{-2}$ as follows
\begin{center}
\begin{tikzpicture}[scale=0.4]
    \draw (1,4) -- (1,0) -- (0,0) -- (0,4) -- (4,4) -- (4,3) --(0,3); 
\draw (3,4) -- (3,2) -- (0,2);
 \draw (2,4) -- (2,1) -- (0,1);
  \draw (0.5,4) node[above]{$1$} (2,4) node[above]{$\dots$}  (3.5,4) node[above]{$r$};
 \draw (0,3.5) node[left]{$r$} (0,2) node[left]{$\vdots$}  (0,0.5) node[left]{$1$};
  \end{tikzpicture}
\hspace{3cm}
\begin{tikzpicture}[scale=0.4]
    \draw (0,0) rectangle (4,4);
\foreach \x in {1,...,3} {\draw (\x,0) -- (\x,4);
\foreach \y in {1,...,3} {\draw (0,\y) -- (4,\y);};
};
 \draw (0.5,4) node[above]{$1$} (2,4) node[above]{$\dots$}  (3.5,4) node[above]{$r$};
 \draw (0,3.5) node[left]{$2n-r+1$} (0,2) node[left]{$\vdots$}  (0,0.5) node[left]{$2n$};
 \end{tikzpicture}  
\end{center}
where the box at line $i$ and column $j$ represents respectively the root
$-\varepsilon_i-\varepsilon_j$ and 
$\hat\varepsilon_i-\hat\varepsilon_j$.

The Schubert classes of $\Gr_\omega(r,2n)$ correspond bijectively with
the subsets $I$ of $\{1,\dots,2n\}$ with $r$ elements such
that $j\in I\ \Rightarrow\ 2n+1-j\not\in I$.
For such a class $I$ set 
$$
I^+=I\cap [1,n]\quad\mbox{and}\quad
I^-=\{2n+1-j\,|\,j\in I\cap[n+1,2n]\}.
$$
 The associated inversion set $\Phi(I)^{-1}$ is a Young diagram. 
 The set $\Phi(I)^{-2}$ is the upper part of a symmetric Young diagram.
 See Figure~\ref{fig:explePhiw}.
 
 \begin{figure}
 \begin{tikzpicture}[scale=0.3]
 \draw [fill=gray!60]  (0,0) -- (0,3) -- (3,3) -- (3,1) -- (4,1) -- (4,0) -- cycle;
    \draw (0,0) rectangle (4,4);
\foreach \x in {1,...,3} {\draw (\x,0) -- (\x,4);
\foreach \y in {1,...,3} {\draw (0,\y) -- (4,\y);};
};

 \end{tikzpicture}  
 \hspace{1cm}
 \begin{tikzpicture}[scale=0.3]
\draw [fill=gray!60] (0,0) -- (0,4) -- (1,4) -- (1,3) -- 
(3,3) -- (3,2) -- (2,2) --(2,1) -- (1,1) -- (1,0) -- cycle;
    \draw (1,4) -- (1,0) -- (0,0) -- (0,4) -- (4,4) -- (4,3) --(0,3); 
\draw (3,4) -- (3,2) -- (0,2);
 \draw (2,4) -- (2,1) -- (0,1);
  \end{tikzpicture}
  \hspace{1cm}
 
 \caption{$\Phi_r(I)^{-1}$ and $\Phi_r(I)^{-2}$ for $r=4$, $n=6$
 and $I=\{2,4,5,10\}$}
 \label{fig:explePhiw}
 \end{figure}
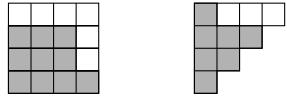
 
The Schubert classes of $\Fl(r,2n-r;2n)$ correspond bijectively with
the  pairs of subsets $J\subset K$ of $\{1,\dots,2n\}$ with $r$
and $2n-r$ elements. 
The associated inversion sets $\hat\Phi_r^{-1}$ and 
$\hat\Phi_r^{-2}$ are pairs of Young diagrams and Young diagrams respectively. See Figure~\ref{fig:explePhiw2}.

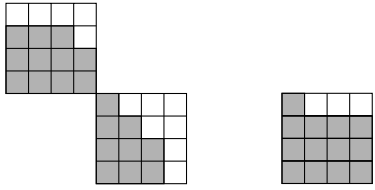
\begin{figure}
\begin{tikzpicture}[scale=0.3]
\draw [fill=gray!60] (-2,1) -- (-2,4) -- (1,4) -- (1,3) -- 
(2,3) -- (2,1)  -- cycle;
\draw [fill=gray!60] (2,-3) -- (2,1) -- (3,1) -- (3,0) -- 
(4,0) -- (4,-1) -- (5,-1) --(5,-3)  -- cycle;

    \draw (-2,1) rectangle (2,5) (1,1) -- (1,5)  (0,1) -- (0,5) (-1,1) -- (-1,5);
\foreach \y in {2,...,4} {\draw (-2,\y) -- (2,\y);};

\draw (2,1) rectangle (6,-3) (2,0) -- (6,0) (2,-1) -- (6,-1) (2,-2) -- (6,-2);
\foreach \x in {3,...,5} {\draw (\x,-3) -- (\x,1);};
  \end{tikzpicture}
  \hspace{1cm}
  \begin{tikzpicture}[scale=0.3]
  \draw [fill=gray!60] (0,0) -- (0,4) -- (1,4) -- (1,3) -- 
(4,3) -- (4,0)  -- cycle;
    \draw (0,0) rectangle (4,4);
\foreach \x in {1,...,3} {\draw (\x,0) -- (\x,4);
\foreach \y in {1,...,3} {\draw (0,\y) -- (4,\y);};
};
 \end{tikzpicture}

\caption{$\Phi_r(I\subset J)^{-1}$ and $\Phi_r(I\subset J)^{-2}$ for $r=4$, $n=6$, $I=\{2,4,5,7\}$ and $J=I\cup\{1,6,9,12\}$}
\label{fig:explePhiw2}
\end{figure} 

\subsection{Step~3: inequalities}

The weights of $G$ and $\hG$ are expressed using the standard bases. In particular, a pair $(\nu,\hnu)$ of dominant weights is given by $3*n-1$
integers $(\nu_i)_{1\leq i\leq n}$ and $(\hnu_i)_{1\leq i\leq 2*n-1}$ satisfying  $\nu_1\geq\cdots\geq \nu_n\geq 0$ and 
$\hnu_1\geq\cdots\geq \hnu_{2n-1}\geq 0$. 
The inequality corresponding to the pair $(I,J\subset K)$ of Schubert classes such that 
$$
\sigma_I.\iota_i^*(\sigma_{J\subset K})\neq 0
$$
is
\begin{eqnarray}
  \label{eq:3}
  \sum_{i\in I^-}\nu_i +\sum_{j\in J}\hat\nu_j
  \leq  \sum_{j\in I^+}\nu_j+\sum_{j\not\in K}\hat\nu_j,
\end{eqnarray}
where by convention $\hnu_{2n}=0$.

For example, the Schubert classes 
$[\hG/\hP]$ and $[G/P]$ correspond to $I=J=\{2n-r+1,\dots,2n\}$ and
$K=\{r+1,\dots,2n\}$. 
The associated inequality is 
 
\begin{eqnarray}
  \label{eq:4}
  \sum_{i=1}^r \nu_i +\sum_{j=2n-r+1}^{2n-1}\hat\nu_j\leq 
  \sum_{k=1}^r \hnu_k,
\end{eqnarray}
for any $r=1,\dots,n-2$ or $r=n$.
The case $r=n-1$ gives redundant inequalities.

\subsection{Step~4: Levi-movability}
In Section~\ref{sec:inclusionSpSl}, we explain, for any $r$, how to realize $\Ta_r$ as a subspace of $\hTa_r$, the action of $\hL_r$ on $\hTa_r$,
and how to encode the inversion sets. This is used in our Sage program 
to determine the L-movable pairs $(I,(J\subset K))$.

\subsection{The lattice $\ZZ LR(\Sp_{2n},\SL_{2n})$}

The center of $Sp_{2n}$ is $\{\pm I_{2n}\}$. Then $(\nu,\hnu)$ belongs to $\ZZ LR(\Sp_{2n},\SL_{2n})$ 
if and only if 
$\sum_{i=1}^n\nu_i+\sum_{j=1}^{2n-1}\hnu_j$ is even.

\subsection{Some extremal rays}
Recall that the fundamental weight are $\varpi_i=\varepsilon_1+\cdots+\varepsilon_i$.

\begin{propo}\label{propo:expleRaysSpSl}
Let $V$ be a $2n$-dimensional vector space endowed with a symplectic
form.
Convention: $V_{\varpi_0}$ denotes the trivial representation.
The following inclusions and their dual give extremal rays (and belong to the Hilbert
basis) of $\QQ_{\geq 0}LR(\Sp_{2n},\SL_{2n})$:
\begin{enumerate}
\item $\CC\subset V(\hat\varpi_{2k})$ with $k=1,\dots,n$;
\item $V({\varpi_i})\subset V(\hat\varpi_j)$ with $j\geq i$ and $j-i$
  even; 
\item $V({\varpi_2})\subset V(\hat\varpi_1+\hat\varpi_{2n-1})$.
\end{enumerate}

The first two items give the only rays with $\hnu$ fundamental.
\end{propo}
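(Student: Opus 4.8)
The plan is to separate the membership statement (the inclusions) from the geometric statements (extremality, the Hilbert-basis property, and the exhaustiveness of the list for $\hnu$ fundamental). For the inclusions I would use only classical $\Sp(V)$-representation theory. Since $V_\hG(\hat\varpi_j)=\bigwedge^j V$ and contraction against the symplectic form gives the Lefschetz-type decomposition $\bigwedge^j V\cong\bigoplus_{k\ge 0}V_G(\varpi_{j-2k})$ for $j\le n$ (and $\bigwedge^j V\cong\bigwedge^{2n-j}V$ for $j>n$), item (2) holds exactly when $0\le i\le j$ and $j-i$ is even, and item (1) is its $i=0$ instance (the trivial summand, present iff $j$ is even). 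For item (3), $V_\hG(\hat\varpi_1+\hat\varpi_{2n-1})=\mathfrak{sl}(V)$, and from $V\cong V^*$ one gets $\mathfrak{sl}(V)=V_G(2\varpi_1)\oplus V_G(\varpi_2)$, the first summand being $\mathfrak{sp}(V)=S^2V$; hence $V_G(\varpi_2)\subset\mathfrak{sl}(V)$. As $\Sp$-modules are self-dual and duality on $\SL_{2n}$ sends $\hat\varpi_j\mapsto\hat\varpi_{2n-j}$, the dual inclusions are of the same three types (duality also reduces $j>n$ to $j\le n$).

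For items (1)--(2) and the completeness assertion I would pass to the fibre over a fundamental weight. Because the projection $(\nu,\hnu)\mapsto\hnu$ maps $\QQ_{\ge 0}LR(\Sp_{2n},\SL_{2n})$ onto the dominant cone of $\SL_{2n}$, and $\QQ_{\ge 0}\hat\varpi_j$ is an extremal ray (hence a face) of that cone, the preimage $G_j:=\{(\nu,s\hat\varpi_j)\in\QQ_{\ge 0}LR\}$ is a face of $\QQ_{\ge 0}LR$; it is the cone over the moment polytope $P(\hat\varpi_j):=\{\mu:(\mu,\hat\varpi_j)\in\QQ_{\ge 0}LR\}$, so its extremal rays correspond to the vertices of $P(\hat\varpi_j)$, and these are precisely the extremal rays of the whole cone with $\hnu$ proportional to $\hat\varpi_j$. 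The crux is to prove $P(\hat\varpi_j)=\mathrm{conv}\{\varpi_i:0\le i\le j,\ i\equiv j\ (\mathrm{mod}\ 2)\}$ (for $j\le n$; the case $j>n$ follows by duality). The inclusion $\supseteq$ is exactly the inclusions of the first paragraph, and since each listed $\varpi_i$ is a $0/1$ vector it is automatically a vertex of any polytope containing it; thus both the extremality of items (1)--(2) and the statement that they exhaust the fundamental-$\hnu$ rays follow at once from this identity.

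For item (3), whose $\hnu$ is not fundamental, I would instead verify directly that $(\varpi_2,\hat\varpi_1+\hat\varpi_{2n-1})$ spans a one-dimensional face by saturating $3n-2$ linearly independent valid inequalities. The ambient lattice $\Chi(T)\times\Chi(\hT)$ has rank $3n-1$; at this point the dominance inequalities already contribute $3n-4$ independent saturated relations (cutting the $\nu$-part to $\QQ\varpi_2$ and the $\hnu$-part to $\{(a,b,\dots,b,0)\}$), the inequality \eqref{eq:4} with $r=2$ contributes the saturated relation $\hnu_1=2\nu_1$, and one further member of the family \eqref{eq:3}, namely the one giving $\hnu_1\le\nu_1+\hnu_2$, contributes $\hnu_1=\nu_1+\hnu_2$; together these force the common solution set to be the ray. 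Finally, the Hilbert-basis claim is formal once extremality is known: every listed generator has coprime coordinates, hence is primitive in $\ZZ LR(\Sp_{2n},\SL_{2n})$, and the primitive generator of an extremal ray that lies in the semigroup cannot be a sum of two nonzero semigroup elements, so it belongs to the Hilbert basis.

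The main obstacle is the inclusion $P(\hat\varpi_j)\subseteq\mathrm{conv}\{\varpi_i:i\equiv j\}$. The naive upper bound coming from $V_\hG(N\hat\varpi_j)\subset S^N(\bigwedge^j V)$ only yields the larger polytope $\{1\ge\mu_1\ge\dots\ge\mu_n\ge 0,\ \sum_i\mu_i\le j\}$, which still contains the wrong-parity vertices $\varpi_{j-1},\varpi_{j-3},\dots$; already for $\Sp_4\subset\SL_4$ with $j=2$ the true fibre collapses to the segment $[\,0,\varpi_2\,]$ while this bound is the full triangle $\mathrm{conv}\{\varpi_0,\varpi_1,\varpi_2\}$. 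One therefore has to produce the extra inequalities enforcing $\mu_1=\mu_2,\ \mu_3=\mu_4,\dots$ on the face $G_j$ (the relations expressing that only the symplectically primitive constituents survive); I expect these to come precisely from the paired Schubert (Levi-movable) inequalities \eqref{eq:3} assembled in Steps~2--3, and this is where the symplectic, rather than the linear, structure is used essentially.
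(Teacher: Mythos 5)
Your reduction of the inclusions to the Lefschetz decomposition of $\bigwedge^j V$ and to $\mathfrak{sl}(V)=V_G(2\varpi_1)\oplus V_G(\varpi_2)$ is fine, and your formal remarks (the preimage of a face under the projection is a face; the primitive generator of an extremal ray lying in the semigroup belongs to the Hilbert basis) are correct. But the heart of the proposition --- that these points span \emph{extremal} rays --- is precisely the part you leave open: you reduce items (1)--(2) and the completeness claim to the identity $P(\hat\varpi_j)=\mathrm{conv}\{\varpi_i : i\le j,\ i\equiv j \bmod 2\}$ and then concede that you cannot prove the inclusion $\subseteq$, only that you ``expect'' the missing inequalities to come from the Levi-movable pairs. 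As written, your argument establishes membership in the cone but not extremality, so the main content is unproved. (Your item (3) argument also rests on the unverified claim that $\hnu_1\le\nu_1+\hnu_2$ is a valid inequality of the cone; it has the shape of \eqref{eq:3}, but one still has to exhibit the corresponding nonvanishing Schubert product.)

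The paper's proof sidesteps the fibre polytope entirely. For item (1), $(0,\hat\varpi_{2k})$ already spans an extremal ray of the dominant chamber of $T\times\hT$, which contains $\QQ_{\geq 0}LR(\Sp_{2n},\SL_{2n})$, so membership alone gives extremality. For items (2) and (3) it intersects the cone with the linear span of a low-dimensional face of the dominant chamber --- $\QQ\varpi_i\oplus\QQ\hat\varpi_j$, respectively $\QQ\varpi_2\oplus\QQ\hat\varpi_1\oplus\QQ\hat\varpi_{2n-1}$ --- and checks that this intersection is a single half-line; since the intersection of the cone with (the span of) a face of the dominant chamber is a face of the cone, that half-line is an extremal ray. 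Pinning down a half-line inside a $2$- or $3$-dimensional slice requires only a couple of valid inequalities (for instance \eqref{eq:4} with $r=i$ together with a reverse inequality), far less than the full facial description of $P(\hat\varpi_j)$ your route demands. The full identification of $P(\hat\varpi_j)$ is genuinely needed only for the final sentence of the proposition (``the only rays with $\hnu$ fundamental''), a point on which the paper's own one-line proof is, admittedly, also silent.
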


\begin{proof}
  The first one is a ray of the dominant chamber. 
The second one is the only half-line in
$\QQ\varpi_i\oplus\QQ\hat\varpi_j$.
The last one is the only half-line in
$\QQ\varpi_2\oplus\QQ\hat\varpi_1\oplus \QQ\hat\varpi_{2n-1}$.
\end{proof}

\subsection{The smallest case: $\Sp_4$ in $\SL_4$}

  The group $\Sp_4$ is $\Spin_5$ and $\SL_4$ is 
$\Spin_6$. In particular, the semigroup is recalled in the example of 
Section~\ref{sec:LRgroup}.

\begin{propo}\label{th:sp4sl4}
The minimal list of inequalities for $\QQ_\geq LR(\Sp_4,\SL_4)$ is 
\begin{enumerate}
\item $\hnu_1-\hnu_2+\hnu_3\leq \nu_1+\nu_2\leq\hnu_1+\hnu_2-\hnu_3$;
\item $\max(-\hnu_1+\hnu_2+\hnu_3,\hnu_1-\hnu_2-\hnu_3)\leq \nu_1-\nu_2\leq\hnu_1-\hnu_2+\hnu_3$;
\end{enumerate}
The indivisible generators of the 5 extremal rays form the Hilbert basis of $\QQ_\geq LR(\Sp_4,\SL_4)\cap \ZZ LR(\Sp_4,\SL_4)$.
The 5 corresponding inclusions $V_G(\nu)\subset V_\hG(\hnu)$ are particular cases of Proposition~\ref{propo:expleRaysSpSl}.
\end{propo}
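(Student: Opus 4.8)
The plan is to carry out the six-step procedure of Section~\ref{sec:algo} for $n=2$, using the explicit description of the inclusion $\iota_r$ and of the inversion sets from Section~\ref{sec:inclusionSpSl}. First I would apply Lemma~\ref{lem:adopsSpSl}: for $n=2$ the only dominant indivisible admissible 1-ps is $\lambda=\lambda_2=(1,1)$, so a \emph{single} 1-ps has to be analysed. For this $\lambda$ one has $G/P(\lambda)=\Gr_\omega(2,4)$ and $\hG/\hP(\lambda)=\Gr(2,4)$, with $\iota_2$ the inclusion $F\mapsto F$; moreover $\Ta=\Ta^{-2}=S^2F^*$ embeds in $\hTa=\hTa^{-2}=\Hom(F,\bar F)$ through~\eqref{iota2Ta}. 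The crucial simplification is that there is only the single graded piece $k=-2$, so that both the dimension condition~\eqref{eq:gdimcond} and the transversality test~\eqref{eq:2} reduce to a question about the one space $\hTa^{-2}$.

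Next I would list the Schubert data: the four classes $I$ of $\Gr_\omega(2,4)$ (the $2$-subsets of $\{1,2,3,4\}$ with $j\in I\Rightarrow 5-j\notin I$) and the six classes of $\Gr(2,4)$, recording the inversion sets $\Phi_2(I)^{-2}$ and $\hat\Phi_2^{-2}$. For each pair meeting the codimension condition~\eqref{eq:gdimcond} I would test Levi-movability by producing an $\hl\in\hL_2=S(\GL(F)\times\GL(\bar F))$ realizing~\eqref{eq:2}; since the image of $S^2F^*$ is a sufficiently general $3$-dimensional subspace of $\Hom(F,\bar F)$ under the $\hL_2$-action, such an $\hl$ exists precisely for the pairs of complementary codimension. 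Each Levi-movable pair is then converted, via~\eqref{eq:3}, into one of the defining inequalities, and Theorem~\ref{th:desccone}(iii) ensures that those with $\iota^*(\sigma_{\hw})\cdot\sigma_w=[pt]$ describe codimension-one faces. After computing the irredundant list (rays of the dual cone, as in Section~\ref{sec:algo}), I expect the dominance inequalities to drop out as redundant and to be left with exactly the two families (1) and (2) of the statement, that is, five inequalities.

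I would then read off the extremal rays of the cone $\mathcal C$ cut out by (1) and (2): there are five, generated by $(0,\hvarpi_2)$, $(\varpi_1,\hvarpi_1)$, $(\varpi_1,\hvarpi_3)$, $(\varpi_2,\hvarpi_2)$ and $(\varpi_2,\hvarpi_1+\hvarpi_3)$. Each is one of the inclusions of Proposition~\ref{propo:expleRaysSpSl} for $n=2$ (its items (1), (2), (2), (2) and (3) respectively), hence lies in $LR(\Sp_4,\SL_4)$; by Theorem~\ref{th:desccone} this forces $\mathcal C=\QQ_{\geq 0}LR(\Sp_4,\SL_4)$, proving the statement on inequalities. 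For saturation I would invoke the lattice description of $\ZZ LR(\Sp_4,\SL_4)$ (the parity of $\sum_i\nu_i+\sum_j\hnu_j$) and compute with 4ti2 the Hilbert basis of $\mathcal C\cap\ZZ LR(\Sp_4,\SL_4)$: the content is that it equals the five ray generators above, each already shown to belong to $LR(\Sp_4,\SL_4)$, which is exactly the saturation property.

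The hardest point is the bookkeeping of Step~4: one must be certain that \emph{every} Levi-movable pair has been found, so that no defining inequality is omitted, and that the surviving list is genuinely minimal. As an independent check I would use the exceptional isomorphisms $\Sp_4\cong\Spin_5$ and $\SL_4\cong\Spin_6$, under which $(\Sp_4,\SL_4)$ becomes the $n=3$ instance of $(\Spin_{2n-1},\Spin_{2n})$ treated in Section~\ref{sec:LRgroup}. There the cone is the simplex $\hnu_1\geq\nu_1\geq\hnu_2\geq\nu_2\geq|\hnu_3|$, with five facets and five rays, and the saturation property is already known; translating its rays and inequalities into the symplectic coordinates used here should reproduce (1), (2) and the five generators, confirming both the count and the minimality.
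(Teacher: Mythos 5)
Your conclusions are all correct, but your main route is genuinely different from the paper's. The paper's entire proof of Proposition~\ref{th:sp4sl4} is the two-line reduction you only offer as an ``independent check'' at the end: via the exceptional isomorphisms $\Sp_4\cong\Spin_5$ and $\SL_4\cong\Spin_6$, the pair becomes the $n=3$ instance of $(\Spin_{2n-1},\Spin_{2n})$ from Section~\ref{sec:LRgroup}, where the cone is the simplex $\hnu_1\geq\nu_1\geq\hnu_2\geq\nu_2\geq|\hnu_3|$ (known from \cite{FH}), dominance is redundant, and the five-element Hilbert basis and the saturation property are already established; rewriting the orthogonal coordinates in the symplectic/special-linear ones yields exactly the five inequalities (1)--(2) and the five rays. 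What the isomorphism buys is that nothing at all needs to be recomputed. What your direct route buys is a self-contained run of the Section~\ref{sec:algo} algorithm inside the uniform $\Sp_{2n}\subset\SL_{2n}$ framework, which is a useful template for $n\geq 3$ where no such isomorphism is available; your setup is correct (only $\lambda_2=(1,1)$ is admissible, everything lives in the single graded piece $k=-2$, so condition~\eqref{eq:gdimcond} collapses to the codimension condition~\eqref{eq:1}, leaving exactly five candidate pairs $(I,(J\subset K))$). The one place you are too quick is the assertion that an $\hl$ realizing~\eqref{eq:2} exists ``precisely for the pairs of complementary codimension'': Levi-movability is not implied by complementary codimension in general and must be verified pair by pair (it does hold for all five pairs here, consistent with the final count of five facets), so this step still requires the explicit transversality checks you describe rather than the genericity of $S^2F^*$ inside $\Hom(F,\bar F)$ alone.
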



\subsection{The case $\Sp_6$ in $\SL_6$}

\begin{propo}\label{th:sp6sl6}
The cone $\QQ_\geq LR(\Sp_6,\SL_6)$ is the set of 
$(\nu,\hnu)$ such that 
\begin{enumerate}
\item $\max(\hnu_1-\hnu_2,\hnu_3-\hnu_4,\hnu_5)\leq\nu_1\leq \hnu_1$;
\item $\nu_2\leq \min(\hnu_1-\hnu_5,\hnu_2)$;
\item $\nu_3\leq \min(\hnu_1-\hnu_4,\hnu_2-\hnu_5,\hnu_3)$;
\item $\hnu_1-\hnu_2+\hnu_3-\hnu_4+\hnu_5\leq \nu_1+\nu_2+\nu_3\leq \hnu_1+\hnu_2+\hnu_3-\hnu_4-\hnu_5$;
\item $\max(-\hnu_1-\hnu_2+\hnu_3+\hnu_4+\hnu_5,
\hnu_1-\hnu_2-\hnu_3-\hnu_4+\hnu_5,
-\hnu_1+\hnu_2-\hnu_3+\hnu_4-\hnu_5
)\leq \nu_1-\nu_2-\nu_3\leq \hnu_1-\hnu_2+\hnu_3-\hnu_4+\hnu_5$;
\item$\max(-\hnu_1+\hnu_2-\hnu_3+\hnu_4+\hnu_5,
\hnu_1-\hnu_2-\hnu_3+\hnu_4-\hnu_5,
-\hnu_1+\hnu_2+\hnu_3-\hnu_4-\hnu_5)\leq \nu_1-\nu_2+\nu_3\leq \min(\hnu_1-\hnu_2+\hnu_3+\hnu_4-\hnu_5,
\hnu_1+\hnu_2-\hnu_3-\hnu_4+\hnu_5)$;
\item $\max(\hnu_1-\hnu_2-\hnu_3+\hnu_4+\hnu_5,-\hnu_1+\hnu_2+\hnu_3-\hnu_4+\hnu_5,\hnu_1-\hnu_2+\hnu_3-\hnu_4-\hnu_5)\leq \nu_1+\nu_2-\nu_3\leq \hnu_1+\hnu_2-\hnu_3+\hnu_4-\hnu_5$;
\item $\nu_1\geq\nu_2\geq\nu_3\geq 0$  (dominance of $\nu$);
\item $\hnu_1\geq\hnu_2\geq\hnu_3\geq\hnu_4\geq\hnu_5\geq 0$  (dominance of $\hnu$).
\end{enumerate}
Moreover this list of inequalities is not redundant.
The 15 extremal rays of the cone are respectively generated by the following vectors written in row.
$$
\begin{array}{c@{\ }c@{\ }c@{\ }c@{\ }c@{\ }c@{\ }c@{\ }c@{\quad}|@{\quad}c@{\ }c@{\ }c@{\ }c@{\ }c@{\ }c@{\ }c@{\ }c@{\quad}|@{\quad}c@{\ }c@{\ }c@{\ }c@{\ }c@{\ }c@{\ }c@{\ }c}
 0 & 0 & 0 & 1 & 1 & 1 & 1 & 0&
 1 & 1 & 1 & 3 & 2 & 2 & 1 & 1&
 0 & 1 & 0 & 2 & 1 & 1 & 0 & 0 \\

 0 & 0 & 0 & 1 & 1 & 0 & 0 & 0&
  1 & 1 & 0 & 1 & 1 & 1 & 1 & 0 &
   1 & 0 & 0 & 1 & 1 & 1 & 1 & 1 \\

 1 & 1 & 1 & 1 & 1 & 1 & 0 & 0&  
 1 & 1 & 0 & 2 & 2 & 2 & 1 & 1 & 
 1 & 0 & 0 & 1 & 1 & 1 & 0 & 0 \\

 1 & 1 & 1 & 2 & 2 & 1 & 1 & 1 &
  1 & 1 & 0 & 1 & 1 & 0 & 0 & 0 & 
  1 & 0 & 0 & 1 & 0 & 0 & 0 & 0 \\

 1 & 1 & 1 & 2 & 1 & 1 & 1 & 0 & 
 1 & 1 & 0 & 2 & 1 & 1 & 1 & 1 &
 2 & 1 & 1 & 2 & 2 & 1 & 1 & 0 
\end{array}
$$

These vectors form the Hilbert basis of the cone $\QQ_\geq LR(\Sp_6,\SL_6)$ in $\ZZ LR(\Sp_6,\SL_6)$.
They correspond to  inclusions of Proposition~\ref{propo:expleRaysSpSl} and the  following ones:
\begin{enumerate}
\item $V(\varpi_3)$ in $V(\hat\varpi_1+\hat\varpi_4)$ and its dual;
\item $V(\varpi_3)$  in $V(\hat\varpi_1+\hat\varpi_3+\hat\varpi_5)$;
\item $V(\varpi_2)$  in $V(\hat\varpi_1+\hat\varpi_3)$ and its dual;
\item $V(\varpi_1)$  in $V(\hat\varpi_3+\hat\varpi_5)$.
\end{enumerate}
\end{propo}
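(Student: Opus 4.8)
The plan is to run the algorithm of Section~\ref{sec:algo}, using Theorem~\ref{th:desccone} to convert Levi-movable pairs into the facets of the cone, and then to certify the outcome by exhibiting the extremal rays as explicit branching inclusions. First I would apply Lemma~\ref{lem:adopsSpSl} with $n=3$: there are exactly two dominant indivisible admissible 1-ps, $\lambda_1=(1,0,0)$ and $\lambda_3=(1,1,1)$ (the value $(1,1,0)$ does not occur, matching the redundancy of the $r=n-1$ inequalities recorded after \eqref{eq:4}). For $\lambda_1$ the relevant inclusion is $\iota_1\colon\Gr_\omega(1,6)\hookrightarrow\Fl(1,5;6)$, with $L_1=\GL_1\times\Sp_4$ sitting inside $\hL_1=S(\GL_1\times\GL_4\times\GL_1)$; the graded tangent pieces are $\Ta_1^{-1}=\Hom(F,G)$, $\Ta_1^{-2}=S^2F^*$, $\hTa_1^{-1}=\Hom(F,G)\oplus\Hom(G,\bar F)$ and $\hTa_1^{-2}=\Hom(F,\bar F)$, with the explicit matrix embeddings of Section~\ref{sec:inclusionSpSl}. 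For $\lambda_3$ the inclusion is $\iota_3\colon\Gr_\omega(3,6)\hookrightarrow\Gr(3,6)$, with $L_3=\GL_3\subset\hL_3=S(\GL_3\times\GL_3)$ and a single graded piece $\Ta_3=S^2F^*\subset\hTa_3=\Hom(F,\bar F)$.

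Next, using the combinatorial parametrization of Section~\ref{sec:inclusionSpSl} (subsets $I$ for the Schubert classes of $G/P$, pairs $J\subset K$ for those of $\hG/\hP$), I would list all pairs satisfying the dimension condition~\eqref{eq:gdimcond}, treating the degrees $k=-1$ and $k=-2$ separately. For each candidate I would decide Levi-movability through Lemma~\ref{lem:Lmovdecompose}, i.e.\ by testing in each degree whether $\iota(\Ta_w^k)\cap\hl\hTa_\hw^k=\{0\}$ for some $\hl\in\hL$; the explicit realization of $\Ta_r\subset\hTa_r$ and of the $\hL_r$-action reduces each test to a generic-rank check performed in Sage. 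Every Levi-movable pair then produces, via formula~\eqref{eq:3}, one inequality~\eqref{eq:ineg}; together with the dominance inequalities these cut out a cone $\mathcal{C}\supseteq\QQ_{\geq 0}LR(\Sp_6,\SL_6)$ by Theorem~\ref{th:desccone}(i), and collecting the surviving inequalities yields the nine families of the statement. Their non-redundancy is read off either from Theorem~\ref{th:desccone}(iii), which makes each point-class inequality a codimension-one face, or directly by computing the facets of $\mathcal{C}$ with 4ti2 via the dual-cone computation described in Section~\ref{sec:algo}.

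Then I would compute with 4ti2 the extremal rays of $\mathcal{C}$, obtaining the fifteen displayed vectors, and verify that each lies in $LR(\Sp_6,\SL_6)$: the rays with $\hnu$ fundamental, together with $V(\varpi_2)\subset V(\hat\varpi_1+\hat\varpi_5)$, are instances of Proposition~\ref{propo:expleRaysSpSl}, while the remaining ones are certified by the four branching inclusions listed at the end (and their duals), each amounting to a single non-vanishing of a branching multiplicity checkable in Sage. Since every defining inequality of $\mathcal{C}$ holds on $\QQ_{\geq 0}LR$, while all generators of $\mathcal{C}$ lie in $LR\subset\QQ_{\geq 0}LR$, convexity forces $\mathcal{C}=\QQ_{\geq 0}LR(\Sp_6,\SL_6)$. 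Finally, the Hilbert-basis and saturation claims follow by computing, with 4ti2, the Hilbert basis of $\mathcal{C}\cap\ZZ LR(\Sp_6,\SL_6)$ using the parity description of the lattice, and checking that it coincides with the fifteen rays already shown to belong to $LR$.

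The main obstacle is the Levi-movability analysis for $\lambda_1$. Because $\hTa_1^{-1}=\Hom(F,G)\oplus\Hom(G,\bar F)$ involves the four-dimensional middle factor $G$, the intersection condition~\eqref{eq:2} in degree $-1$ is a genuine transversality statement for Schubert cells under the nontrivial action of $\Sp_4\subset\GL_4$, not a formality, and it is precisely here that one must separate the truly Levi-movable pairs from those meeting only the numerical condition~\eqref{eq:gdimcond}. A secondary subtlety, already flagged in Step~4 of Section~\ref{sec:algo}, is that failing to find a witnessing $\hl$ does not by itself prove non-Levi-movability; the exclusion of the non-movable candidates is confirmed only a posteriori, once all rays of $\mathcal{C}$ are shown to lie in $LR$.
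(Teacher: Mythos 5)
Your proposal is correct and follows essentially the same route as the paper: the authors prove this proposition exactly by running the algorithm of Section~3 for $n=3$ (admissible 1-ps $\lambda_1=(1,0,0)$ and $\lambda_3=(1,1,1)$ from Lemma~\ref{lem:adopsSpSl}, the explicit tangent-space embeddings of Section~\ref{sec:inclusionSpSl}, Levi-movability tests via Lemma~\ref{lem:Lmovdecompose} in Sage, rays and Hilbert basis via 4ti2), and then certifying the rays through Proposition~\ref{propo:expleRaysSpSl} and the four listed extra inclusions. Your closing double-inclusion argument and your caveat about non-Levi-movable candidates being excluded only a posteriori match the paper's Step~6 exactly.
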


\subsection{The case $\Sp_8$ in $\SL_8$}

For $\lambda_1$, $\lambda_2$ and $\lambda_4$, we obtain respectively $14$, $47$ and $53$ L-movable pairs.
With the 11 inequalities of
dominance this gives 125 inequalities. The following one is the only one to be 
redundant 
\begin{eqnarray}
\label{eq:redundant8}
\nu_1-\nu_2+\nu_3-\nu_4\geq -\hnu_1+\hnu_2-\hnu_3+\hnu_4-\hnu_5+\hnu_6-\hnu_7
\end{eqnarray}
it is associated to the following well covering pair $(w,\hat w)$ of $LG(4,8)$ in
$Gr(4,8)$ such that

\begin{center}
$\Phi(w)=$
\begin{tikzpicture}[scale=0.3]
\draw [fill=gray!60] (0,0) -- (0,4) -- (1,4) -- (1,3) -- (2,3) --
(2,1) -- (1,1) -- (1,0) -- cycle;
    \draw (1,4) -- (1,0) -- (0,0) -- (0,4) -- (4,4) -- (4,3) --(0,3); 
\draw (3,4) -- (3,2) -- (0,2);
 \draw (2,4) -- (2,1) -- (0,1);
  \end{tikzpicture}
\hspace{1cm}and
\hspace{1cm}
$\Phi(\hw)=$
\begin{tikzpicture}[scale=0.3]
\draw [fill=gray!60] (0,0) -- (0,4) -- (1,4) -- (1,3) -- (2,3) --
(2,2) -- (3,2) -- (3,1) -- (4,1) -- (4,0) -- cycle;
    \draw (0,0) rectangle (4,4);
\foreach \x in {1,...,3} {\draw (\x,0) -- (\x,4);
\foreach \y in {1,...,3} {\draw (0,\y) -- (4,\y);};
};
 \end{tikzpicture}  
.
\end{center}

The rays of the face corresponding to the redundant inequality~\eqref{eq:redundant8} are the 8 following vectors.
$$
\begin{array}{rrrrrrrrrrr}
0&0&0&0&1&1&0&0&0&0&0 \\
 0&0&0&0&1&1&1&1&0&0&0 \\
 0&0&0&0&1&1&1&1&1&1&0 \\
 1&1&0&0&1&1&0&0&0&0&0 \\
 1&1&0&0&1&1&1&1&0&0&0 \\
 1&1&0&0&1&1&1&1&1&1&0 \\
 1&1&1&1&1&1&1&1&0&0&0 \\
 1&1&1&1&2&2&1&1&1&1&0 
\end{array}
$$

Using 4ti2, we check that the Hilbert basis consists of the  49 generators of rays.
The semigroup is saturated by the PRV Theorem (see \cite{MPR})
and by computer checking for the 4
following cases (remark that the last one is the dual of the 3rd one ).
$$        	
\begin{array}{rrrrrrrrrrr}
 1 & 1 &
1 & 1 & 2 & 2 & 2 & 1 & 1 & 0 & 0 \\
2 & 2 &
2 & 0 & 3 & 3 & 2 & 2 & 1 & 1 & 0 \\
 2 & 2 &
1 & 1  & 3 & 3 & 2 & 2 & 2 & 0 & 0 \\
2 & 2 &
1 & 1 & 3 & 3 & 3 & 1 & 1 & 1 & 0 
\end{array}
$$

\subsection{The case $\Sp_{10}$ in $\SL_{10}$}

We have 534 $L$-movable pairs. With the 14 dominancy inequalities, they give 548 inequalities, including 29 redundant ones. We obtain 194 rays.
The Hilbert basis consist of the set of primitive  generators of the rays.
Note that 4ti2 needed about 250 hours to make this computation.
The PRV Theorem  \cite{MPR} shows that 141 elements of this Hilbert basis belong to 
$LR(G,\hG)$.
Using the fact that $V\subset\hat V$ if and only if $V\subset\hat V^*$, the list of
remaining cases can be reduced to  31 cases. Using Sage, we check that these 31 points belong to  $LR(G,\hG)$. 
Some details are available in authors' web pages.

\subsection{Final remarks}

These examples raise a natural question (in addition to the question of Section~\ref{sec:intro}). Indeed, we remark that the Hilbert basis equals the set of primitive generators of rays for $n=2,\,3,\,4$ and $5$. Is this fact
  true for any $n$?\\
  
  For all the computed examples,  the cones $\QQ_{\geq 0}LR(G,\hG)$ have few rays compared to the number of facets. For example, $\QQ_{\geq 0}LR(Sp_8,Sl_8)$ has 49 rays 
  and 124 facets, and  $\QQ_{\geq 0}LR(Sp_{10},Sl_{10})$ has 194 rays   and 531 facets.
  This suggests that it could be interesting to study these rays from a theoretic point of view, whereas the litterature concentrates on the facets ?\\
  
In the programs used to compute the inequalities, the rays, the Hilbert basis and to check the saturation property, the most expensive in time is the computation of the Hilbert basis with 4ti2. That is why, we do not try to study the cases for $n\geq 6$. 
Another limiting factor is the computation of the inversion sets. But, here our programs are really not optimal. If someone is interested in computing the inequalities for $n\geq 6$, he could considerably improve them to do it in a more reasonable time.

\bibliographystyle{amsalpha}
\bibliography{saturationGhG}

\end{document}